\newtheorem{theorem}{Theorem}[section]
\newtheorem{lemma}[theorem]{Lemma}
\newtheorem{corollary}[theorem]{Corollary}
\newtheorem{remark}[theorem]{Remark}
\DeclareMathOperator{\Aut}{Aut}
\DeclareMathOperator{\SL}{SL}
\DeclareMathOperator{\Fit}{Fit}
\DeclareMathOperator{\FATR}{FATR}
\newcommand{\sub}{\leqslant}
\newcommand{\nsub}{\nleqslant}
\newcommand{\C}{\mathcal{ C}}
\newcommand{\nil}{\mathfrak{Nil}}
\newcommand{\cyc}[1]{\langle #1\rangle}
\newcommand{\Core}{{\rm Core}}
\newcommand{\nor}{\trianglelefteq}
\newcommand{\nno}{\ntrianglelefteq}
\begin{document}
\title[\scriptsize{A note on meta and para-$\nil$-Hamiltonian groups}]{A note on meta and para-$\nil$-Hamiltonian groups}

\author[ Hamid Mousavi]{Hamid Mousavi}

\address{{Department of Pure Mathematics, Faculty of Mathematical Sciences, University of Tabriz, Tabriz, Iran.}}
\email{hmousavi@tabrizu.ac.ir}

\subjclass[2020]{ 20F19, 20F22}
\keywords{Meta-$\mathfrak{Nil}$-Hamiltonian, Para-$\mathfrak{Nil}$-Hamiltonian}

\begin{abstract}
Let $\mathfrak{Nil}$ be the class of nilpotent groups. This article explores the finiteness of meta and para-$\mathfrak{Nil}$-Hamiltonian groups or their derived subgroups when these groups contain a  soluble subgroup of finite index or a non-nilpotent (or insoluble) subgroup of finite order respectively.
\end{abstract}
\maketitle
\section{Introduction}
Let $\nil$ be a class of nilpotent groups. {\it The group $G$ is said to be meta-$\nil$-Hamiltonian if all its non-nilpotent subgroups is normal}. Also, {\it we say that $G$ is para-$\nil$-Hamiltonian if $G$ is a non-nilpotent group and every non-normal subgroup of $G$ is either nilpotent or minimal non-nilpotent}. Also, {\it $G$ is called biminimal non-$\nil$ group if it is neither a nilpotent nor a minimal non-nilpotent group, but each proper subgroup of $G$ either is nilpotent or is a minimal non-nilpotent group}. Para-$\nil$-Hamiltonian groups are a natural extension of  biminimal non-nilpotent groups.

If $\mathfrak{A}$ is the class of abelian groups, then the class of meta-$\mathfrak{A}$-Hamiltonian is called metahamiltonian and the class of para-$\mathfrak{A}$-Hamiltonian groups is called parahamiltonian. Many researchers have been done on metahamiltonian groups, which can be referred to \cite{BFT, FGM, FGM1, FangAn, RoSea1, RoSea2, RoSea3}.

Recall that a group $G$ is known as locally graded if every non-trivial finitely generated subgroup of $G$ contains a proper subgroup of finite index.

In the previous article~\cite{DM}, it is shown that the para-$\nil$-Hamiltonian group $G$, if contains a minimal non-nilpotent subgroup whose normal closure is locally graded, then $G'$ is finite and if $G$ is  insoluble, so $G$ is finite~\cite[Theorem 4.6, Corollary 4.7]{DM}. Also any finite insoluble para-$\nil$-Hamiltonian group is isomorphic either to $A_5$ or $\SL(2,5)$ (\cite[Theorem 3.6]{DM}).
 
Also, the structure of insoluble meta-$\nil$-Hamiltonian groups has been investigated in two perfect and non-perfect cases (\cite[Propositions 5.8, 5.9, 5.10]{DM}). Additionally, it has been shown that every locally graded meta-$\nil$-Hamiltonian group is solvable~\cite[Lemma 5.11]{DM}.

In this article (Theorem~\ref{T-W}), we will demonstrate that if $G$ is a soluble-by-finite meta-$\mathfrak{Nil}$-Hamiltonian group, then every finitely generated non-nilpotent subgroup of $G$ possesses a non-nilpotent homomorphic image. These groups are called $\mathcal{W}$-groups. De Mari and de Giovanni~\cite[Theorem A]{MaGi} showed that if a $\mathcal{W}$-group $G$ has finitely many normalizers of a non-locally nilpotent subgroup, then either $G$ is locally nilpotent or its commutator subgroup $G'$ is finite.
Since a non-nilpotent finitely generated meta-$\mathfrak{Nil}$-Hamiltonian group is not locally nilpotent, then $G'$ must be finite, as finite meta-$\mathfrak{Nil}$-Hamiltonian group is soluble~\cite[Theorem 3.10]{DM}, so $G'$ is soluble, and consequently, $G$ is also soluble (Corollary~\ref{C-3.6}). We have an updated version of Corollary~\ref{C-3.6} by replacing the assumption of soluble-by-finite with (torsion-free soluble)-by-finite. This demonstrates that $G$ is polycyclic and has a nilpotent Fitting subgroup of finite index, as stated in Theorem~\ref{T-3.8}.

Also, In this article, it will be shown that, if the para-$\nil$-Hamiltonian group contains a finite normal insoluble subgroup, then $G$ is finite (Theorem~\ref{T-3.10}). Additionally, an insoluble para-$\nil$-Hamiltonian group is finite if it contains a nilpotent subgroup of finite index or a finite non-nilpotent subgroup with a finite normal closure (Theorems ~\ref{T-3.11}, \ref{T-3.12}).

\section{Primary and definition}
A group is called minimax, if it has a series of finite length whose factors either satisfy the minimal or the maximal condition. 

Now we need the following theorem of  Amberg, Franciosi, and de Giovanni (1988).
\begin{theorem}\cite[Theorem 6.3.4]{AmFrGi}\label{Amb}
 Let the group $G = AB = AK = BK$ be the product of three nilpotent subgroups $A$, $B$, and $K$, where $K$ is normal in $G$. If $K$ is minimax, then $G$ is nilpotent. 
\end{theorem}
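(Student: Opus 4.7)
The plan is to argue by induction on the minimax length of $K$, exploiting the fact that the triple factorization descends to every quotient $G/N$ with $N \nor G$ and $N \sub K$: the images $AN/N$, $BN/N$, and $K/N$ remain nilpotent and the three product equalities $(AN/N)(BN/N) = (AN/N)(K/N) = (BN/N)(K/N) = G/N$ persist. I would first reduce to the case where $K$ is abelian by quotienting out successive terms of the derived series of $K$, each of which is characteristic in $K$ and hence normal in $G$; the lifting of nilpotency across these steps will be handled by the abelian case applied to each factor.

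In the case where $K$ is abelian and minimax, the quotient $G/K$ is nilpotent because $G/K \cong A/(A \cap K)$ is a homomorphic image of the nilpotent group $A$. Since $K \sub C_G(K)$, the further quotient $G/C_G(K)$, viewed as a subgroup of $\Aut(K)$, is also nilpotent. The heart of the argument is then to show that $K$ lies in the hypercenter of $G$; combined with nilpotency of $G/K$ and the minimax condition on $K$ (which bounds the length of any stabilizing $G$-central series inside $K$), this yields nilpotency of $G$. I would attempt hypercentrality by passing to $G$-chief factors of $K$, which are finite in number by the minimax hypothesis, and proving each of them is $G$-central. On each such chief factor $V$, the image of $G$ in $\Aut(V)$ is a nilpotent group arising simultaneously as a quotient of $A$ and of $B$, and the compatibility relation $AK = BK$ together with the product structure $G = AB$ should force this image to act trivially.

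The main obstacle I anticipate is precisely the analysis of these chief factors: ruling out a non-trivial faithful action of a nilpotent quotient of $G$ on an irreducible chief factor of $G$ inside $K$. This is not automatic, since a nilpotent group can in general act faithfully and irreducibly on a minimax abelian group (for instance, an infinite cyclic group acting by multiplication on a quasicyclic group). The specific restriction imposed by the triple factorization is what eliminates such pathologies: whenever $A$ acts unipotently on an $A$-invariant section of $K$ by virtue of the nilpotency of $A$, the identification $A/(A \cap K) \cong G/K \cong B/(B \cap K)$ forced by $AK = BK$ imposes a parallel constraint on the $B$-action, and joint consistency with $G = AB$ forces triviality. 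A secondary but more routine difficulty is the lifting step from abelian sections of $K$ back to the full $K$ in the initial reduction, which should follow by iterating the same hypercentrality analysis along the derived series of $K$.
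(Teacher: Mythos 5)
The paper does not prove this statement: it is quoted verbatim from Amberg--Franciosi--de Giovanni (Theorem 6.3.4 of \emph{Products of Groups}) and used as a black box, so there is no internal proof to compare yours with. Judged on its own terms, your outline follows the broadly standard strategy (reduce to $K$ abelian, show $K$ is hypercentrally embedded, combine with nilpotency of $G/K\cong A/(A\cap K)$), but it has genuine gaps at exactly the points where the content of the theorem lies. Most importantly, the central step --- that the nilpotent image of $G$ acts trivially on the relevant sections of $K$ --- is not argued at all; you write that the relations $G=AB$ and $AK=BK$ ``should force'' triviality, which is a restatement of the goal rather than a proof. This is the part of the argument the source devotes several preparatory lemmas to, treating separately the elementary abelian, radicable (quasicyclic) and torsion-free rational cases.

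Two of the structural claims underpinning your plan are also false. First, a minimax abelian group need not have finitely many $G$-chief factors: $K=\Z_{p^\infty}$ is minimax, and its only proper invariant subgroups are the finite ones, so every chief series of $K$ is infinite. Second, and more seriously, centrality of all chief factors does not bound the length of a stabilizing central series in a minimax group: for odd $p$ the automorphism $1+p$ of $\Z_{p^\infty}$ stabilizes the series of finite subgroups and hence acts trivially on every chief factor, yet the corresponding split extension is hypercentral of length exactly $\omega$ and is not nilpotent. So your assertion that ``the minimax condition bounds the length of any stabilizing $G$-central series inside $K$'' fails precisely on the divisible part of $K$, which is where a genuinely different argument (showing the action is trivial, not merely trivial on chief factors) is required. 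Finally, the reduction along the derived series of $K$ does not preserve the hypotheses: once $G/K'$ is known to be nilpotent, the subgroup $K'$ is normal, nilpotent and minimax, but $G\neq AK'$ and $G\neq BK'$, so the abelian case cannot simply be applied to $K'/K''$; the source handles this via the factorizer $AN\cap BN$ of a normal subgroup $N$ and a more careful induction. As it stands, the proposal is a reasonable road map but not a proof.
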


\begin{lemma}\cite[4.6.4]{LeRob}\label{L-2.2}
Let $G$ be a finitely generated soluble group. Then $G$ is polycyclic if and only if $x^{\cyc{g}}$ is finitely generated for all $x, g \in G$.
\end{lemma}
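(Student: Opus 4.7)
The forward implication is immediate, since every subgroup of a polycyclic group is again polycyclic and in particular finitely generated.

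For the converse, I would proceed by induction on the derived length $d$ of $G$. The base case $d = 1$ is simply the fact that a finitely generated abelian group is polycyclic. For the inductive step, set $A = G^{(d-1)}$, which is an abelian normal subgroup of $G$; then $G/A$ is a finitely generated soluble group of derived length $d-1$, and the hypothesis plainly passes to homomorphic images (preimages of $\bar x, \bar g$ land in $G$, and $\pi(x^{\cyc{g}}) = \bar x^{\cyc{\bar g}}$). By the inductive hypothesis $G/A$ is polycyclic, and in particular finitely presented.

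Since $G$ is finitely generated and $G/A$ is finitely presented, a standard argument produces finitely many elements $a_1,\ldots,a_m \in A$ whose $G$-normal closures together generate $A$; it therefore suffices to show that each $a_i^G$ is a finitely generated subgroup. Because $A$ is abelian, the conjugation action of $G$ on $A$ factors through $Q := G/A$, so I would fix a polycyclic series $1 = Q_0 \sub Q_1 \sub \cdots \sub Q_k = Q$ with cyclic quotients, pick preimages $g_1,\ldots,g_k \in G$ of the corresponding generators, and build up $M_j := a_i^{\cyc{g_1,\ldots,g_j}}$ inductively. Starting from $M_0 = \cyc{a_i}$, if $M_j = \cyc{b_1,\ldots,b_r}$ is finitely generated then $M_{j+1}$ is generated by the union of the subgroups $b_s^{\cyc{g_{j+1}}}$ ($s=1,\ldots,r$), each of which is finitely generated by the hypothesis of the lemma; hence $M_{j+1}$ is finitely generated. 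Iterating to $j = k$ gives $a_i^G = M_k$ finitely generated, so $A$ is finitely generated. Being a finitely generated abelian group, $A$ is polycyclic, and $G$ is polycyclic as an extension of polycyclic groups.

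The main obstacle, to my mind, is the technical input that when $G$ is finitely generated and $G/A$ is finitely presented, then $A$ is finitely generated as a normal subgroup of $G$; this hidden ingredient (resting on the fact that polycyclic groups are finitely presented) is what couples the inductive conclusion about $G/A$ back to control of $G$ itself. Once that is granted, the rest is entirely routine: the one-generator hypothesis $x^{\cyc{g}}$ finitely generated is promoted to finite generation of $a_i^Q$ by climbing the polycyclic series of $Q$ one cyclic step at a time.
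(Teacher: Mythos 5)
Your proof is correct. Note that the paper itself gives no proof of this lemma --- it is quoted verbatim from Lennox--Robinson (4.6.4) --- so there is nothing internal to compare against; your argument is the standard one for this result (it is essentially the Milnor--Wolf argument): induction on derived length, finite presentability of the polycyclic quotient $G/A$ to get $A$ finitely generated as a normal subgroup, and then climbing a polycyclic series of $G/A$ one cyclic step at a time, using that the conjugation action on the abelian normal subgroup $A$ factors through $G/A$ and that $Q_j\trianglelefteq Q_{j+1}$ so that $Q_{j+1}=Q_j\langle\bar g_{j+1}\rangle$. All the steps check out, including the two technical inputs you flag (polycyclic $\Rightarrow$ finitely presented, and the B.~H.~Neumann fact that a normal subgroup with finitely presented quotient in a finitely generated group is the normal closure of a finite set).
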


If $G$ is nilpotenen, then $G'\sub\Phi(G)$. The converse is true for ploycyclic group.
\begin{lemma}\cite[4.5.20]{Rob}\label{L-2.2.1}
If G is a polycyclic group and $G'\sub \Phi(G)$, then $G$ is nilpotent. 
\end{lemma}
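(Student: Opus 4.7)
The plan is to reduce the polycyclic statement to the classical finite-group version by exploiting residual finiteness of polycyclic groups (a theorem of Hirsch) together with the maximal condition.

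First I would recall the finite version: if $H$ is a finite group with $H' \leq \Phi(H)$, then $H$ is nilpotent. Indeed $H/\Phi(H)$ is abelian, so for any Sylow $p$-subgroup $P$ of $H$ the product $P\Phi(H)$ is normal in $H$; a Frattini argument then yields $H = N_H(P)\Phi(H) = N_H(P)$ (using that elements of $\Phi(H)$ are non-generators), so every Sylow subgroup is normal and $H$ is nilpotent.

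Next I would push this to every finite quotient of $G$. For $K \trianglelefteq G$ of finite index, $(G/K)' = G'K/K \leq \Phi(G)K/K \leq \Phi(G/K)$, the last inclusion being a standard general property of the Frattini subgroup. The finite case then yields that $G/K$ is nilpotent. Because $G$ is polycyclic it satisfies the maximal condition, so the lower central series $\gamma_1(G) \geq \gamma_2(G) \geq \cdots$ stabilizes at some $N = \gamma_c(G) = \gamma_{c+1}(G)$. If $N \neq 1$, residual finiteness of polycyclic groups supplies a $K$ of finite index with $N \not\leq K$; but then $NK/K = \gamma_m(G/K)$ for every $m \geq c$ and is nontrivial, contradicting the nilpotency of the finite quotient $G/K$. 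Hence $N = 1$ and $G$ is nilpotent.

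The main obstacle is arguably the residual finiteness of polycyclic groups, which is the deepest ingredient invoked; the Sylow/Frattini calculation in the finite case and the compatibility of $\Phi$ with quotients are entirely routine. An alternative, more direct route would attempt to run the Sylow/Frattini argument inside $G$ itself via Hall's $\pi$-subgroup theory for finite soluble quotients, but the reduction above is cleaner and requires no such apparatus beyond cited facts.
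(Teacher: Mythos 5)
The paper does not prove this lemma at all --- it is quoted from Robinson's book --- so your argument can only be measured against the standard proof of that cited result. Your finite-group step (the Sylow--Frattini argument showing that a finite $H$ with $H'\le\Phi(H)$ is nilpotent) and your reduction to finite quotients via $\Phi(G)K/K\le\Phi(G/K)$ are both correct. The gap is in the final step: you assert that because a polycyclic group satisfies the maximal condition, the lower central series stabilizes. The maximal condition is the \emph{ascending} chain condition, while the lower central series is a \emph{descending} chain, and descending chains of subgroups of a polycyclic group need not terminate. Concretely, in the infinite dihedral group $D_\infty=\langle a,b\mid b^2=1,\ a^b=a^{-1}\rangle$ one has $\gamma_n(D_\infty)=\langle a^{2^{n-1}}\rangle$ for $n\ge 2$, a strictly decreasing infinite chain. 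That example does not satisfy $G'\le\Phi(G)$, but your stabilization claim makes no use of that hypothesis; and establishing stabilization \emph{under} that hypothesis is essentially equivalent to the lemma itself, so the step is circular as well as unjustified.

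What your first two steps actually reduce the lemma to is the statement: a polycyclic group all of whose finite quotients are nilpotent is nilpotent. This is a genuine theorem of Hirsch (5.4.18 in Robinson), not a consequence of residual finiteness alone. Residual finiteness together with nilpotency of every finite quotient only yields residual nilpotence, because the nilpotency classes of the finite quotients could a priori be unbounded; the whole content of Hirsch's theorem is to rule this out, typically by induction on the Hirsch length using a nontrivial free abelian normal subgroup $A$ and the action of $G$ on the quotients $A/A^p$ for infinitely many primes $p$. So your outline follows the standard route, but the deepest ingredient is the one that is missing: it is Hirsch's theorem, not (as you suggest) the comparatively soft fact that polycyclic groups are residually finite.
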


A soluble group has finite abelian total rank, or is a soluble FATR-group, if it has a series in which each factor is abelian of finite total rank (see~\cite[Page: 85 ]{LeRob}).

\begin{lemma}\cite[5.1.6 (ii)]{LeRob}\label{L-2.3}
Let G be a soluble group. Then $G$ has $\FATR$  if and only if $G$ is poly-(cyclic, quasicyclic, or rational).
\end{lemma}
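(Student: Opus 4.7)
The plan is to establish both directions of the equivalence. For the easy direction, suppose $G$ is poly-(cyclic, quasicyclic, or rational), so $G$ has a finite subnormal series whose factors are cyclic, quasicyclic ($\Z(p^\infty)$), or isomorphic to a subgroup of the additive rationals. Each of these is abelian of total rank at most one (the total rank being the sum of the torsion-free rank and the $p$-ranks over all primes $p$), so the given series already certifies that $G$ has $\FATR$.

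For the converse, the strategy is to start from a series of $G$ with abelian factors of finite total rank and then refine each such factor into a finite chain whose quotients are cyclic, quasicyclic, or rational. Fix an abelian factor $A$ of finite total rank and examine its torsion subgroup $T(A)$. Having finite total rank forces $T(A)$ to involve only finitely many primes, and each $p$-primary component $A_p$ has finite $p$-rank; a reduced abelian $p$-group of finite rank is finite, while the divisible part is a finite direct sum of Pr\"ufer groups $\Z(p^\infty)$. Hence each $A_p$ admits a finite series with cyclic or quasicyclic factors, and combining these over the finitely many relevant primes yields a finite such series for $T(A)$.

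Next, $A/T(A)$ is torsion-free abelian of finite rank $r$ and therefore embeds into the $\mathbb{Q}$-vector space $\mathbb{Q}^r$. Choosing a maximal chain of pure subgroups $0=B_0<B_1<\cdots<B_r=A/T(A)$ produces successive quotients $B_i/B_{i-1}$ that are torsion-free of rank one, hence isomorphic to subgroups of $\mathbb{Q}$, i.e.\ rational. Lifting this chain through the short exact sequence $0\to T(A)\to A\to A/T(A)\to 0$ and prepending the series for $T(A)$ gives the desired refinement of $A$. Applying this refinement to every abelian factor of the original series for $G$ yields the required poly-(cyclic, quasicyclic, rational) series.

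The main obstacle I anticipate is the torsion-free step: an abelian group of finite torsion-free rank need not split as a direct sum of rational groups, so one cannot argue by decomposition and must instead work inside $\mathbb{Q}^r$ with chains of pure subgroups. The other delicate point is checking that the refined series is still finite; this is where finiteness of the total rank is used crucially, both to bound the number of primes contributing to $T(A)$ and to bound the length of the chain coming from $\mathbb{Q}^r$. For rigor one would appeal to the structure theory of abelian groups of finite rank developed in Fuchs and reproduced in Chapter~5 of \cite{LeRob}.
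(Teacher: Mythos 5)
This statement is quoted from Lennox--Robinson (5.1.6(ii)) and the paper gives no proof of its own, so there is nothing to compare against except the standard textbook argument --- which is exactly what you reproduce: refine each abelian factor of finite total rank into a torsion part (a Chernikov group, since only finitely many primes occur and each $p$-component of finite rank is divisible-by-finite) followed by a chain of pure subgroups of the torsion-free quotient with rank-one factors. Your proof is correct; the only step that deserves explicit justification is the claim that a reduced abelian $p$-group of finite rank is finite, which is not immediate from the definition of rank but follows from the basic-subgroup argument showing that an abelian $p$-group of finite rank is a direct sum of finitely many cyclic and quasicyclic groups, as you indicate by deferring to Fuchs.
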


As a result of Gruenberg (1961) and Mal’cev (1951), the Fitting subgroup of  a soluble group with finite abelian total rank, is nilpotent.
\begin{theorem}\cite[5.2.2]{LeRob}\label{T-2.4}
 Let $G$ be a soluble group with finite abelian total rank. Then $\Fit(G)$ is nilpotent and $G/ \Fit(G)$ is abelian-by-finite. Thus $G$
is nilpotent-by-abelian-by-finite.
\end{theorem}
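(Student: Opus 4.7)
The plan is to combine Lemma~\ref{L-2.3} with classical structural theorems for soluble minimax groups. A preliminary observation I would record is that every $\FATR$-group is soluble minimax, since each of the three admissible factor types (cyclic, quasicyclic, rational) is abelian minimax; this places us in a setting where both ascending and descending chain conditions apply to the relevant normal series, and the Hirsch length is finite.

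For the nilpotency of $F:=\Fit(G)$, I would argue that $F$ is itself a locally nilpotent soluble $\FATR$-group. Using a poly-(cyclic, quasicyclic, rational) series of $F$ supplied by Lemma~\ref{L-2.3}, the plan is to climb its upper central series and check that each central quotient absorbs at least one factor of the series, so the upper central series terminates at $F$ after finitely many steps. The rational (divisible torsion-free) factors will require the additional argument that a locally nilpotent action on a finite-dimensional $\mathbb{Q}$-vector space preserving a central series must be unipotent, and hence is centralised after finitely many commutator steps.

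For $G/F$, I would examine the conjugation action of $G$ on the abelian section $F/F'$, which is an abelian minimax group and, up to finite index, decomposes into a direct product of finitely many copies of $\mathbb{Q}$ and of Pr\"ufer groups. The automorphism group of such a module is essentially a product of classical linear groups over $\mathbb{Z}$, $\mathbb{Q}$, and the $p$-adic integers, and soluble subgroups of these are abelian-by-finite by Mal'cev's theorem on soluble linear groups. Kaloujnine's theorem on the stability group of a normal series would then force the kernel of this action to be nilpotent, hence already contained in $F$, so that $G/F$ embeds into a soluble linear group and is therefore abelian-by-finite.

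The main obstacle will be controlling the action on the divisible torsion-free part of $F/F'$: I would need to rule out soluble subgroups of $\GL_n(\mathbb{Q})$ that fail to be abelian-by-finite, which is exactly where Mal'cev's linearity theorem enters in an essential way. Once both assertions are secured, the final clause that $G$ is nilpotent-by-abelian-by-finite follows at once from the tower $F \nor G$ with $F$ nilpotent and $G/F$ abelian-by-finite.
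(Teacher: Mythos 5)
This theorem is quoted in the paper from Lennox--Robinson [5.2.2] without proof, so there is no internal argument to compare against; your proposal follows the standard textbook route (reduce the nilpotency of $\Fit(G)$ to Mal'cev's theorem on locally nilpotent groups of finite rank, then linearize the action of $G$ on abelian sections and invoke Mal'cev's theorem on soluble linear groups together with a Kaloujnine--Hall stability argument). However, your opening ``preliminary observation'' is false: an $\FATR$-group need not be minimax. Cyclic and quasicyclic factors are indeed minimax, but rational factors are not: the additive group $\mathbb{Q}$ is a soluble $\FATR$-group (abelian of rank $1$) which satisfies neither max nor min, and is not even (max)-by-(min), since each of its subgroups with max is cyclic and $\mathbb{Q}/\Z\cong\bigoplus_p\Z(p^\infty)$ fails min. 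The soluble minimax groups are exactly the poly-(cyclic or quasicyclic) groups, a proper subclass of the soluble $\FATR$-groups. What you may legitimately use is that the defining series has finite length and that the torsion-free rank is finite; any step of your argument that genuinely invokes a chain condition would fail.

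Two further points need repair. First, an abelian $\FATR$-group such as $F/F'$ does not, even up to finite index, decompose as a direct product of copies of $\mathbb{Q}$ and Pr\"ufer groups (the group $\Z$ already refutes this); what is true, and suffices for your linearization, is that a torsion-free abelian group of finite rank $n$ embeds in its divisible hull $\mathbb{Q}^n$, so that its automorphism group embeds in $\GL_n(\mathbb{Q})$, while the torsion part is Chernikov and its automorphisms are handled over the $p$-adic integers. Second, the kernel $K$ of the action of $G$ on the single section $F/F'$ does not stabilize a normal series of $G$, so Kaloujnine's theorem does not apply to it directly; you must first observe that an automorphism of the nilpotent group $F$ acting trivially on $F/F'$ acts trivially on every factor $\gamma_i(F)/\gamma_{i+1}(F)$ (these being epimorphic images of tensor powers of $F/F'$), so that $K/C_K(F)$ is nilpotent by stability theory, and then combine this with the fact that $C_G(\Fit(G))\le\Fit(G)$ for soluble $G$ and with the unipotence of the action of $K$ on $Z(F)$ to conclude that $K=\Fit(G)$. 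With these corrections your plan does go through, but as written these are genuine gaps.
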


By a result of Robinson (1968), any maximal subgroup of soluble-by-finite group with finite abelian ranks, is of finite index (\cite[5.2.12]{LeRob}).

Recall that a group $G$ is called a $\mathcal{W}$-group if every finitely generated non-nilpotent subgroup of $G$ has a finite, non-nilpotent, homomorphic image. 

\begin{theorem}\cite[Theorem A]{MaGi}\label{T-2.6}
Let $G$ be a $\mathcal{W}$-group with finitely many normalizers of non- (locally nilpotent) subgroups. Then either $G$ is locally nilpotent or its commutator subgroup $G'$ is finite.
\end{theorem}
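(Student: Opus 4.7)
The plan is to assume $G$ is not locally nilpotent and derive that $G'$ is finite; the two hypotheses complement each other, since the $\mathcal{W}$-property produces finite non-nilpotent quotients of finitely generated non-nilpotent subgroups, while the finite-normalizer condition imposes a global rigidity on the subgroup lattice.

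First, let $\{N_1,\dots,N_t\}$ be the finite set of normalizers of non-locally-nilpotent subgroups of $G$, and set $T=N_1\cap\cdots\cap N_t$, so that $T$ normalizes every non-locally-nilpotent subgroup of $G$. The next step, which I expect to be the main obstacle, is to prove $[G:T]$ is finite: the hypothesis supplies only a finite set of normalizers, not that each has finite index. To handle this, I would pick a finitely generated non-nilpotent subgroup $H\leq G$---which exists because $G$ is not locally nilpotent---and analyze the orbit $H^G$ under conjugation. The $\mathcal{W}$-hypothesis produces a finite non-nilpotent quotient of $H$, so $H$ has only finitely many subgroups of each bounded index; coupling this with the fact that every conjugate of $H$ has normalizer among $N_1,\dots,N_t$ should bound the number of $G$-conjugates of $H$ and, by a covering argument as $H$ varies over finitely generated non-nilpotent subgroups, force $[G:T]<\infty$.

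Working inside $T$, every non-locally-nilpotent subgroup is normal in $T$, so $T$ behaves like a meta-locally-nilpotent-Hamiltonian group. I would then invoke Dietzmann/B.~H.~Neumann-style arguments: for any $x\in T$ and any finitely generated non-nilpotent $K\leq T$, the normality of $K$ together with the existence of a finite non-nilpotent quotient of $\cyc{K,x}$ confines $[K,x]$ to a finite normal subgroup. Exhausting $T$ by its finitely generated non-nilpotent subgroups---justified because $T$ cannot be locally nilpotent, since $[G:T]$ is finite and $G$ is not---should then yield $T'$ finite.

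Finally, I would lift finiteness of $T'$ to $G'$. Since $[G:T]$ is finite and $T'$ is finite, $T$ is abelian-by-finite modulo a finite normal subgroup, hence so is $G$; a Schur/Neumann argument then forces $G'$ to be finite, completing the dichotomy. The principal difficulty remains the first step---converting the combinatorial finiteness of the set of normalizers into a genuine index condition---after which the argument proceeds through fairly standard Dietzmann/Neumann machinery.
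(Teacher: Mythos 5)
The paper gives no proof of this statement: it is quoted verbatim from de Mari and de Giovanni \cite{MaGi} (their Theorem A), so your attempt can only be measured against that source, not against anything internal to this article. Your proposal has a genuine gap exactly where you yourself locate ``the main obstacle'', and the patch you sketch does not work. From the fact that every conjugate $H^g$ has its normalizer among the finitely many subgroups $N_1,\dots,N_t$ you cannot conclude that $H$ has finitely many conjugates: distinct conjugates may share the same normalizer, and the number of conjugates of $H$ equals $|G:N_G(H)|$, which the hypothesis does nothing to make finite --- the assumption restricts \emph{which} subgroups occur as normalizers, not their indices. Consequently $|G:T|<\infty$ is not established, and the reduction to the case ``every non-(locally nilpotent) subgroup is normal'' inside a finite-index subgroup collapses. (What the conjugation-closedness of $\{N_1,\dots,N_t\}$ actually gives is only that $T=\bigcap_i N_i$ is normal in $G$; a priori its index may be infinite.)

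The later steps are also not routine. The Dietzmann-type argument you invoke to confine $[K,x]$ to a finite normal subgroup needs a finite normal subset consisting of elements of finite order; no torsion is available here, and the $\mathcal{W}$-property supplies finite non-nilpotent \emph{quotients}, not finiteness of commutators. The natural route to ``$G'$ finite'' is B.~H.~Neumann's criterion (every subgroup nearly normal, i.e.\ of finite index in its normal closure), and the real content of the cited proof is to manufacture near-normality of finitely generated subgroups by playing the normality of non-(locally nilpotent) subgroups against the finite non-nilpotent images provided by the $\mathcal{W}$-condition; your sketch does not yet do this. As written, the proposal is a plausible programme rather than a proof.
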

\begin{lemma}\cite[Corollary 2.3.]{MaGi}\label{L-2.7}
 Let $G$ be a locally graded group with finitely many normalizers of  non-nilpotent subgroups. Then $G$ is soluble-by-finite.
\end{lemma}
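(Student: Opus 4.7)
The plan is to construct a soluble normal subgroup of finite index in $G$ by producing a finite-index subgroup that is meta-$\nil$-Hamiltonian, then invoking \cite[Lemma 5.11]{DM}, which asserts that locally graded meta-$\nil$-Hamiltonian groups are soluble.

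Write $\mathcal{F}=\{N_{G}(H):H\sub G,\ H\ \text{non-nilpotent}\}=\{N_{1},\dots,N_{r}\}$. Since conjugation sends $N_{G}(H)$ to $N_{G}(H^{g})$, the group $G$ permutes $\mathcal{F}$, and the kernel $M$ of the induced homomorphism $G\to\mathrm{Sym}(\mathcal{F})$ has index at most $r!$ in $G$; in particular $M$ normalises each $N_{i}$. The hypotheses of the lemma (local gradedness and the finiteness of $\mathcal{F}$) are inherited by subgroups of finite index, and soluble-by-finiteness is preserved by passage to a finite over-group, so it suffices to prove that $M$ is soluble-by-finite.

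The crux is to show that $M$ is itself meta-$\nil$-Hamiltonian, i.e.\ every non-nilpotent subgroup $H\sub M$ is normalised by $M$. For any such $H$ and $m\in M$ the $M$-conjugate $H^{m}$ has normalizer $N_{G}(H)^{m}=N_{G}(H)$, so the $M$-orbit of $H$ is contained in $N_{G}(H)$ and consists of non-nilpotent subgroups with a common normalizer. Assuming this $M$-orbit is finite (the \emph{orbit-finiteness claim}), the index $|M:N_{M}(H)|$ is finite, and after intersecting $M$ with the finitely many normalisers of a set of orbit representatives one arrives at a finite-index subgroup of $M$ in which every non-nilpotent subgroup of $G$ is normal. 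Applying \cite[Lemma 5.11]{DM} yields solubility of this subgroup, and hence soluble-by-finiteness of $G$.

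The principal obstacle is the orbit-finiteness claim: the finiteness of $\mathcal{F}$ alone gives only $|G:N_{G}(N_{G}(H))|<\infty$, which does not by itself bound $|H^{M}|$. Refining it requires combining local gradedness with a Neumann-style covering argument inside the finitely generated subgroup $\langle H^{M}\rangle$: local gradedness supplies proper subgroups of finite index, while finiteness of $\mathcal{F}$ must be used to prevent the conjugates $H^{m}$ from forming an infinite antichain of non-nilpotent subgroups of $N_{G}(H)$ sharing the same normalizer. Balancing the locally graded condition against the normalizer-finiteness to rule out such pathological orbits is the delicate step on which the argument turns.
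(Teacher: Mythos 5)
The paper offers no proof of Lemma~\ref{L-2.7}: it is quoted verbatim as \cite[Corollary 2.3]{MaGi}, so there is no internal argument to compare yours against, and a self-contained proof would have to reproduce the work of de Mari and de Giovanni.

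As a standalone proof your proposal is incomplete, and you say so yourself. The reduction to the finite-index kernel $M$ of the conjugation action on the finite set $\mathcal{F}=\{N_1,\dots,N_r\}$ is sound, as is the observation that for non-nilpotent $H\sub M$ and $m\in M$ one has $N_G(H^m)=N_G(H)^m=N_G(H)$. But everything downstream hinges on the ``orbit-finiteness claim'' --- that the $M$-orbit of $H$ is finite --- which you explicitly assume and never establish. This is not a removable technicality: finiteness of $\mathcal{F}$ only controls normalizers of normalizers (each $N_i$ has finitely many conjugates, hence $|G:N_G(N_i)|<\infty$), and it is a priori consistent with that constraint for a single $N_i$ to contain infinitely many $G$-conjugate non-nilpotent subgroups, each normal in $N_i$ and each having $N_i$ as its full normalizer. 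Excluding exactly this kind of configuration is the substance of the cited result, so the proposal leaves the essential difficulty untouched; the closing paragraph gestures at ``a Neumann-style covering argument'' without supplying one. Note also that orbit-finiteness is really a proxy for the true issue: since every non-nilpotent $H$ satisfies $H\sub N_G(H)=N_i$ for some $i$, the intersection $W=\bigcap_i N_i$ automatically normalizes every non-nilpotent subgroup it contains, so $W$ is meta-$\nil$-Hamiltonian; what is missing is any argument that $W$ (or your intersected subgroup of $M$) has finite index in $G$. Finally, be wary of circularity in invoking \cite[Lemma 5.11]{DM}: a meta-$\nil$-Hamiltonian group has a single normalizer of non-nilpotent subgroups, so the solubility of locally graded meta-$\nil$-Hamiltonian groups is itself naturally a consequence of \cite[Corollary 2.3]{MaGi}, and using the former to prove the latter may simply run the implication backwards.
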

\section{ Meta-$\mathfrak{Nil}$-Hamiltonian group}
Let $G$ be non-nilpotent meta-$\nil$-Hamiltonian group and $H$ be a non-normal subgroup of $G$. Then $H$ must be nilpotent. Therefore $G$ is a para-$\nil$-Hamiltonian group such that all of whose non-normal subgroups are nilpotent. So we must focus on normal subgroup of $G$, in particular normal subgroup of finite index.

\begin{lemma}\label{L-3.4}
Let $G$ be a finitely generated soluble group with torsion-free abelian subgroup of finite index. Then $G$ is polycyclic and $\Fit(G)$ is nilpotent of finite index.
\end{lemma}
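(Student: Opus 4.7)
The plan is to first extract a normal torsion-free abelian subgroup of finite index in $G$, then use it to pin down both conclusions simultaneously.

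Let $A \sub G$ be the torsion-free abelian subgroup of finite index given by hypothesis. Since $G$ is finitely generated and $[G:A]<\infty$, a standard Schreier-type argument shows $A$ is finitely generated; being also torsion-free abelian, $A \cong \Z^n$ for some $n$, hence polycyclic. To get a normal such subgroup, I would pass to the core $N = \Core_G(A) = \bigcap_{g \in G} A^g$. This is a finite intersection (since $[G:A]<\infty$), so $[G:N]<\infty$; and since $N \sub A$ with $A$ abelian, $N$ is itself a free abelian group of finite rank.

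For the first conclusion, I would observe that $G$ is now an extension of the polycyclic group $N$ by the finite group $G/N$. Since $G$ is assumed soluble, it suffices to show $G$ satisfies the maximal condition on subgroups: every subgroup $H \sub G$ intersects $N$ in a subgroup of finite index in $H$, and $H \cap N$ is finitely generated because $N$ satisfies max, whence $H$ itself is finitely generated. Thus $G$ is soluble with max, which is equivalent to $G$ being polycyclic.

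For the second conclusion, I would use that $N$ is a normal abelian subgroup of $G$, so $N \sub \Fit(G)$. Since $[G:N]<\infty$, this forces $[G:\Fit(G)]<\infty$. Finally, any polycyclic group has finite abelian total rank (it is visibly poly-cyclic, a special case of poly-(cyclic, quasicyclic, or rational) by Lemma~\ref{L-2.3}), so Theorem~\ref{T-2.4} applies and yields that $\Fit(G)$ is nilpotent, completing the proof.

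There is no serious obstacle here: the only point needing a small argument is the passage from ``virtually polycyclic and soluble'' to ``polycyclic,'' which is handled by the max-condition observation above. The Fitting part is then immediate from the core construction and Theorem~\ref{T-2.4}.
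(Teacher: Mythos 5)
Your proposal is correct, and its skeleton matches the paper's: both pass to the core $N=\Core_G(A)$, observe it is free abelian of finite rank and normal of finite index, and then obtain the statement about $\Fit(G)$ from the fact that $N\sub\Fit(G)$ together with Lemma~\ref{L-2.3} and Theorem~\ref{T-2.4}. The only genuine divergence is in how polycyclicity is established. The paper verifies the Lennox--Robinson criterion of Lemma~\ref{L-2.2}: for $x,g\in G$ the normal closure $L=x^{\cyc{g}}$ has $L/(L\cap N)$ finite and $L\cap N$ finitely generated (as a subgroup of a free abelian group of finite rank), so $L$ is finitely generated and the quoted lemma applies. You instead show directly that $G$ satisfies the maximal condition --- every subgroup $H$ meets $N$ in a finite-index, finitely generated subgroup, hence is itself finitely generated --- and invoke the standard equivalence ``soluble with max $=$ polycyclic.'' Both arguments are sound; yours is somewhat more elementary and self-contained (it does not need the $x^{\cyc{g}}$ criterion at all), while the paper's is shorter given that Lemma~\ref{L-2.2} has already been quoted in its preliminaries. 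A cosmetic point in your favour: you correctly cite $N\sub\Fit(G)$ for the finite-index claim, whereas the paper writes $A\sub\Fit(G)$ even though $A$ itself need not be normal.
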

\begin{proof}
Suppose that $A$ is a torsion-free abelian subgroup of finite index in $G$. Hence $|G: A_G|$ is also finite. In particular, $N=A_G$ is finitely generated, torsion-free abelian group.

Assume that $x, g\in G$ and $L=x^{\cyc{g}}$. As $ L/(L\cap N)\cong LN/N\sub G/N$ is finite and $L\cap N$ is finitely generated, for $N$ is free abelian group, so $L$ is finitely generated. Now, by Lemma~\ref{L-2.2}, $G$ is polycyclic and by Lemma~\ref{L-2.3}, $G$ is $\FATR$-group. Therefore, $\Fit(G)$ is nilpotent by Theorem~\ref{T-2.4}. As $A\sub \Fit(G)$, so $\Fit(G)$ is of finite index.
\end{proof}

\begin{theorem}\label{T-W}
Let  $G$ be a meta-$\mathfrak{Nil}$-Hamiltonian group. If $G$ is a soluble-by-finite, then $G$ is  a $\mathcal{W}$-group. 
\end{theorem}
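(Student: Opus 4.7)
The plan is to reduce to a finitely generated group and then produce a finite non-nilpotent homomorphic image. Let $H$ be a finitely generated non-nilpotent subgroup of $G$. Both hypotheses descend to $H$: a non-nilpotent subgroup of $H$ is non-nilpotent in $G$, hence normal in $G$ and therefore normal in $H$; and solubility-by-finiteness is obviously hereditary. I may thus rename $H$ as $G$ and assume $G$ is itself finitely generated. Fix a soluble normal subgroup $S$ of finite index in $G$. If $G/S$ is not nilpotent then it already serves as the required finite non-nilpotent quotient, so assume $G/S$ is nilpotent.

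The main strategy will be to show $G$ is polycyclic, after which a Frattini-style argument yields the conclusion: since $G$ is polycyclic and non-nilpotent, the contrapositive of Lemma~\ref{L-2.2.1} gives $G' \nsub \Phi(G)$, so a maximal subgroup $M$ of $G$ avoids $G'$; by Robinson's theorem (quoted after Theorem~\ref{T-2.4}) every maximal subgroup of a polycyclic group has finite index, so $G/\Core_G(M)$ is a finite quotient whose commutator subgroup is not contained in its Frattini subgroup and is therefore not nilpotent. In the easy case where $S$ is itself nilpotent, $G$ is finitely generated nilpotent-by-finite and hence polycyclic, so the preceding argument applies directly.

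The substantial case is $S$ non-nilpotent. To establish polycyclicity here I plan to apply Lemma~\ref{L-2.2} to the finitely generated soluble group $S$, reducing the task to showing $x^{\cyc{g}}$ is finitely generated for every $x, g \in S$. Examining the two-generator subgroup $\cyc{x, g}$: if it is nilpotent, then it is finitely generated nilpotent (hence polycyclic) and its subgroup $x^{\cyc{g}}$ is finitely generated; if it is not nilpotent, the meta-$\mathfrak{Nil}$-Hamiltonian hypothesis forces $\cyc{x, g} \nor G$, and the same dichotomy applied to $L = x^{\cyc{g}}$ shows that either $L$ is nilpotent or $L \nor G$, the latter forcing $L$ to coincide with the normal closure $x^{G}$. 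The chief obstacle is this last sub-case: converting the normality of $L$ in $G$ into finite generation of $L$. My intended approach is to induct on the derived length of $S$, exploiting that $G/S'$ is finitely generated abelian-by-finite and hence polycyclic, together with the observation that the meta-$\mathfrak{Nil}$-Hamiltonian condition ought to forbid infinitely generated abelian sections of $S$, since any such section would supply a non-normal non-nilpotent subgroup of $G$ in violation of the hypothesis.
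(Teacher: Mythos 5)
There is a genuine gap at exactly the step you yourself flag as the chief obstacle. When $L=x^{\cyc{g}}$ is non-nilpotent, the hypothesis gives $L\nor G$ and hence $L=x^{G}$, but nothing in your argument yields finite generation of $L$, and the repair you sketch rests on a false premise: an infinitely generated abelian section is abelian, hence \emph{nilpotent}, so the meta-$\nil$-Hamiltonian condition places no restriction on it whatsoever (only non-nilpotent subgroups are forced to be normal). The claim that the hypothesis ``ought to forbid infinitely generated abelian sections of $S$'' therefore has no support, and the induction on derived length does not get off the ground. More globally, you are trying to establish polycyclicity of $G$ as a \emph{step} towards Theorem~\ref{T-W}, whereas in this paper polycyclicity of finitely generated soluble-by-finite meta-$\nil$-Hamiltonian groups is a \emph{consequence} of Theorem~\ref{T-W} (via Corollary~\ref{C-3.6}, whose finiteness of $G'$ relies on the de Mari--de Giovanni Theorem~\ref{T-2.6}); you are attempting something strictly harder than the statement to be proved.

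The paper's proof is much shorter and bypasses polycyclicity entirely: since $G/S_G$ is a finite meta-$\nil$-Hamiltonian group it is soluble by \cite[Theorem 3.10]{DM}, so $G$ itself is soluble; a finitely generated non-nilpotent subgroup $H$ of $G$ is then a finitely generated soluble non-nilpotent group, and Robinson's theorem \cite[4.4.4]{LeRob} (a finitely generated soluble group all of whose finite quotients are nilpotent is itself nilpotent) immediately provides a finite non-nilpotent homomorphic image of $H$. Your opening reduction to finitely generated subgroups and your closing Frattini argument (Lemma~\ref{L-2.2.1} together with the finite index of maximal subgroups) are sound as far as they go, but the latter only applies once polycyclicity is known, which is precisely the point your argument cannot reach.
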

\begin{proof}
Let $S$ be a soluble group of finite index in $G$. As finite meta-$\mathfrak{Nil}$-Hamiltonian group is soluble and $G/S_G$ is finite, so $G$ is soluble. Assume that $H$ is a finitely generates non-nilpotent subgroup of $G$. By a result of Robinson,~\cite[4.4.4]{LeRob}, $H$ has a finite non-nilpotent homomorphic image. Therefore $G$ is a $\mathcal{W}$-group.
\end{proof}

\begin{corollary}\label{C-3.6}
Let  $G$ be a finitely generated non-nilpotent meta-$\mathfrak{Nil}$-Hamiltonian group. If $G$ is a soluble-by-finite, then $G'$ is finite.
\end{corollary}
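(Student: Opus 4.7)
The plan is to apply the De Mari--de Giovanni theorem (Theorem~\ref{T-2.6}) after verifying its two hypotheses: the $\mathcal{W}$-group property and the finiteness of the set of normalizers of non-(locally nilpotent) subgroups. The $\mathcal{W}$-group property is handed to us essentially for free: the preceding Theorem~\ref{T-W} says that any soluble-by-finite meta-$\nil$-Hamiltonian group is a $\mathcal{W}$-group, so this step is just a citation.

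The second step is the heart of the deduction, and it is where the meta-$\nil$-Hamiltonian hypothesis pulls its weight. I would argue that any non-(locally nilpotent) subgroup $H\sub G$ is, a fortiori, non-nilpotent, so by definition of meta-$\nil$-Hamiltonian we have $H\nor G$, i.e.\ $N_G(H)=G$. Consequently the collection of normalizers of non-(locally nilpotent) subgroups of $G$ consists of the single element $G$ itself, and the finiteness hypothesis of Theorem~\ref{T-2.6} is trivially satisfied. Applying that theorem then gives the dichotomy: either $G$ is locally nilpotent or $G'$ is finite.

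To close the argument I would rule out the locally nilpotent alternative. Since $G$ is finitely generated, any finite generating set lies in a finitely generated (hence nilpotent) subgroup, forcing $G$ itself to be nilpotent; this contradicts the standing hypothesis that $G$ is non-nilpotent. Hence $G'$ must be finite.

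The only conceptual point to watch is the direction of the inclusion between ``non-nilpotent'' (the class distinguished by the meta-$\nil$-Hamiltonian definition) and ``non-(locally nilpotent)'' (the class appearing in Theorem~\ref{T-2.6}): the former is the wider class, so every non-(locally nilpotent) subgroup is automatically non-nilpotent and therefore normal. This is precisely the inclusion needed, and once it is noted there is no remaining obstacle---the whole argument reduces to assembling Theorem~\ref{T-W} and Theorem~\ref{T-2.6}.
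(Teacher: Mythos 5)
Your proposal is correct and follows essentially the same route as the paper: cite Theorem~\ref{T-W} for the $\mathcal{W}$-group property, apply Theorem~\ref{T-2.6}, and exclude the locally nilpotent alternative because a finitely generated locally nilpotent group is nilpotent. The only difference is that you explicitly verify the ``finitely many normalizers of non-(locally nilpotent) subgroups'' hypothesis (every such subgroup is non-nilpotent, hence normal in a meta-$\nil$-Hamiltonian group), a step the paper leaves implicit; this is a worthwhile clarification but not a different argument.
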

\begin{proof}
By Theorem~\ref{T-W}, $G$ is a $\mathcal{W}$-group. As $G$ is not locally-nilpotent, by Theorem~\ref{T-2.6}, $G'$ is finite.
\end{proof}

\begin{corollary}\label{C-3.7}
Let  $G$ be a locally graded  meta-$\mathfrak{Nil}$-Hamiltonian group. Then either $G$ is locally nilpotent or $G'$ is finite.
\end{corollary}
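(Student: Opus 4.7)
The plan is to bootstrap from the three ingredients already assembled: the meta-$\nil$-Hamiltonian hypothesis (which trivializes the normalizer count), Lemma~\ref{L-2.7} (which upgrades local gradedness to soluble-by-finite), and Theorem~\ref{T-W} together with Theorem~\ref{T-2.6} (which delivers the dichotomy). The key observation that makes everything work is the following: in a meta-$\nil$-Hamiltonian group $G$, every non-nilpotent subgroup is normal by definition, so its normalizer equals $G$. In particular, the set of normalizers of non-nilpotent subgroups of $G$ has at most one element, namely $G$ itself.

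First, I would record that $G$ has only finitely many (in fact, at most one) normalizers of non-nilpotent subgroups, and apply Lemma~\ref{L-2.7} to conclude that the locally graded group $G$ is soluble-by-finite. Then Theorem~\ref{T-W} applies and gives that $G$ is a $\mathcal{W}$-group. To close the argument via Theorem~\ref{T-2.6}, I need the stronger statement that $G$ has only finitely many normalizers of non-(locally nilpotent) subgroups. But this follows from the same observation: any subgroup that fails to be locally nilpotent fails a fortiori to be nilpotent, hence is normal in $G$ by the meta-$\nil$-Hamiltonian hypothesis, and so has normalizer $G$. Thus Theorem~\ref{T-2.6} yields the dichotomy that either $G$ is locally nilpotent or $G'$ is finite, which is precisely the conclusion.

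There is essentially no obstacle in this argument beyond checking that the normalizer conditions of Lemma~\ref{L-2.7} and Theorem~\ref{T-2.6} are indeed satisfied; the main content of the corollary lies in the prior theorems, and the meta-$\nil$-Hamiltonian hypothesis is tailor-made to feed into their normalizer hypotheses. One small care point is to remember that \emph{non-nilpotent} and \emph{non-(locally nilpotent)} are not the same notion, but the implication needed here (non-locally-nilpotent $\Rightarrow$ non-nilpotent) goes in the correct direction, so the reduction is immediate.
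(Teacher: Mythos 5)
Your proposal is correct and follows the same route as the paper: Lemma~\ref{L-2.7} to get soluble-by-finite, Theorem~\ref{T-W} to get the $\mathcal{W}$-property, and Theorem~\ref{T-2.6} for the dichotomy. The only difference is that you explicitly verify the normalizer hypotheses (at most one normalizer, namely $G$, since every non-nilpotent --- a fortiori every non-locally-nilpotent --- subgroup is normal), which the paper leaves implicit; this is a welcome clarification but not a different argument.
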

\begin{proof}
By Lemma~\ref{L-2.7}, $G$ is soluble-by-finite. So $G$ is a $\mathcal{W}$-group (by Theorem~\ref{T-W}). Now by Theorem~\ref{T-2.6}, the result is obtained.
\end{proof}

\begin{theorem}\label{T-3.8}
Let $G$ be a finitely generated non-nilpotent meta-$\mathfrak{Nil}$-Hamiltonian group. If $G$ has a torsion-free soluble subgroup of finite index, then:
\begin{itemize}
\item[(i)] $G'$ is finite and $G$ is soluble;
\item[(ii)] $G$ is center-by-finite;
\item[(iii)]$G$ is polycyclic and $\Fit(G)$ is nilpotent of finite index;
\item[(iv)] there are only finitely many maximal subgroups $M$ such that $G=MG'$, obviously $M$ is nilpotent.
\end{itemize}
\end{theorem}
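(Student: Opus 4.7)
The plan is to derive the four claims in order, each leaning on the previous parts plus a single ingredient from the preliminaries.

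For~(i), a torsion-free soluble-by-finite group is in particular soluble-by-finite, so Corollary~\ref{C-3.6} applies and gives that $G'$ is finite. For the solubility of $G$, note that $G'$ inherits the meta-$\nil$-Hamiltonian property (if $K\sub G'$ is non-nilpotent then $K$ is non-nilpotent in $G$, hence normal in $G$, hence in $G'$), and a finite meta-$\nil$-Hamiltonian group is soluble by~\cite[Theorem~3.10]{DM}; together with $G/G'$ abelian, this makes $G$ soluble. It is useful to record at this point that the torsion-free soluble subgroup $S$ of finite index satisfies $S\cap G'=1$ (a finite subgroup of a torsion-free group is trivial), so $S$ embeds in $G/G'$ and is itself torsion-free abelian; this feeds into~(iii).

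For~(ii), I will use the standard observation that $G'$ finite bounds conjugacy class sizes: since $x^{G}\sue xG'$, one has $[G:C_G(x)]\le|G'|$ for every $x\in G$. Writing $G=\cyc{g_1,\dots,g_k}$, we get $Z(G)=\bigcap_{i=1}^{k}C_G(g_i)$, an intersection of finitely many subgroups of index at most $|G'|$, hence of finite index.

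For~(iii), the observation recorded in~(i) makes $G$ a finitely generated soluble group with a torsion-free abelian subgroup of finite index, which is exactly the hypothesis of Lemma~\ref{L-3.4}; applying it yields both polycyclicity and the nilpotent Fitting subgroup of finite index.

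For~(iv), let $M$ be a maximal subgroup of $G$ with $G=MG'$. The key first step is that $M$ is nilpotent: otherwise the meta-$\nil$-Hamiltonian property forces $M\nor G$, making $G/M$ simple; but $G$ is soluble by~(i), so $G/M$ must be cyclic of prime order and in particular abelian, forcing $G'\sub M$ and contradicting $G=MG'$. The finiteness count then reduces to an index bound: the second isomorphism theorem applied to $G=MG'$ gives $[G:M]=[G':M\cap G']\le|G'|$, so every such $M$ has index at most $|G'|$ in $G$. Because $G$ is finitely generated, it has only finitely many subgroups of each given finite index (homomorphisms $G\to S_n$ are finite in number), so only finitely many $M$ qualify. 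I expect the only slightly delicate step to be the nilpotency of $M$ via the simple-soluble argument; once that is in hand, the index bound and the counting close things routinely.
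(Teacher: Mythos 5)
Your proposal is correct, and its overall skeleton matches the paper's: part (i) via Corollary~\ref{C-3.6} plus the solubility of finite meta-$\nil$-Hamiltonian groups, and part (iii) by feeding a torsion-free abelian subgroup of finite index into Lemma~\ref{L-3.4}. You diverge in two places, both toward more self-contained arguments. For (ii) the paper passes to the core $S_G$, notes $S_G\cap G'=1$, and deduces $[S_G,G]\sub S_G\cap G'=1$, so that $S_G$ itself is a central subgroup of finite index; you instead use the bound $[G:C_G(x)]\le|G'|$ coming from $x^G\sue xG'$ and intersect the centralizers of a finite generating set. Your route is slightly more general (it needs only that $G$ is finitely generated with $G'$ finite, not the torsion-freeness), at the cost of not exhibiting an explicit central subgroup; the paper's route gets the centrality of $S_G$ essentially for free from the same computation $S\cap G'=1$ that you record for (iii). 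For (iv) the paper only exhibits one such $M$ (a non-normal maximal subgroup exists by Lemma~\ref{L-2.2.1} since $G$ is polycyclic and non-nilpotent) and cites \cite[2.7]{DKS} for the finiteness of the count, whereas you prove both halves directly: the nilpotency of every maximal $M$ with $G=MG'$ by the normal-implies-prime-index-implies-$G'\sub M$ contradiction, and the finiteness from the index bound $[G:M]=[G':M\cap G']\le|G'|$ together with the finiteness of the set of subgroups of bounded index in a finitely generated group. This replaces an external citation with an elementary counting argument and actually justifies the clause ``obviously $M$ is nilpotent'' in the statement, which the paper's proof leaves implicit. (One small remark: in your (iv) the appeal to solubility is not needed --- a maximal subgroup that is normal has a quotient with no proper nontrivial subgroups, hence cyclic of prime order, so $G'\sub M$ follows without (i).)
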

\begin{proof}
 By Corollary~\ref{C-3.6},  $G'$ is finite and $G$ is soluble. Assume that $S$ is torsion-free subgroup of finite index. As  $|G: S_G|$ is finite, $S_G$ is finitely generated, torsion-free soluble subgroup of $G$. Hence $S_G\cap G'=1$ and so $S_G\sub Z(G)$ is torsion-free abelian of finite rank, where $Z(G)$ is the center of $G$. Therefore $|G: Z(G)|$ is finite.
Now by Lemma~\ref{L-3.4}, $G$ is a ploycyclic group with nilpotent Fitting subgroup of finite index. 

Since $G$ is a non-nilpotent polycyclic group, it has a non-normal maximal subgroup $M$ (by Lemma~\ref{L-2.2.1}) that is nilpotent. Thus, we have $G' \nsub M$, which follows that $G = MG'$. The number of these maximal subgroups $M$ is finite since $G$ is finitely generated and $G'$ is finite (by \cite[2.7]{DKS}).
\end{proof}

Theorem~\ref{T-3.8} is also an updated version of~\cite[Theorem 3.5]{DM}, where the assumption of (torsion-free nilpotent)-by-finite is replaced with (torsion-free soluble)-by-finite. Unfortunately, the proof in~\cite[Proposition 5.5]{DM} contains an error.

\begin{theorem}\label{T-3.9}
Let $G$ be a  non-nilpotent meta-$\mathfrak{Nil}$-Hamiltonian group. If $G$ is polycyclic-by-finite, then $G'$ is finite, $G$ is finite-by-polycyclic and $\Fit(G)$ is nilpotent-by-finite.
\end{theorem}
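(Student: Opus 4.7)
The plan is to piggy-back on Corollary~\ref{C-3.6} and then harvest the extra structural consequences from finiteness of $G'$. The crucial first observation is that polycyclic-by-finite implies finitely generated (a polycyclic subgroup of finite index is already finitely generated, and a finitely-generated-by-finite group is finitely generated) and polycyclic implies soluble, so $G$ is a finitely generated, non-nilpotent, soluble-by-finite meta-$\mathfrak{Nil}$-Hamiltonian group. This is precisely the hypothesis of Corollary~\ref{C-3.6}, which instantly gives $|G'|<\infty$.

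With $G'$ finite, I would split the remaining conclusions in two. First, $G/G'$ is a finitely generated abelian group and hence polycyclic, so the normal series $1\trianglelefteq G'\trianglelefteq G$ has finite bottom and polycyclic top, exhibiting $G$ as finite-by-polycyclic. Second, to control $\Fit(G)$, I would invoke the classical Schur/Baer observation that a finitely generated group with finite derived subgroup has centre of finite index: if $g_1,\ldots,g_n$ generate $G$, then each conjugacy class $g_i^G$ lies in the coset $g_iG'$, so $|g_i^G|\leq |G'|$, whence each $C_G(g_i)$ has finite index and therefore so does
\[
Z(G)\;=\;\bigcap_{i=1}^n C_G(g_i).
\]

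Once $Z(G)$ is known to have finite index in $G$, the Fitting conclusion comes for free. Indeed $Z(G)$ is abelian and normal, hence $Z(G)\leq \Fit(G)$, so $|\Fit(G):Z(G)|\leq |G:Z(G)|$ is finite, making $\Fit(G)$ abelian-by-finite and a fortiori nilpotent-by-finite. I do not expect a real obstacle: the work is essentially a chain of immediate consequences of Corollary~\ref{C-3.6} and standard facts about groups with finite commutator subgroup. The only point requiring a moment of care is the reduction in the opening paragraph — noticing that ``polycyclic-by-finite'' silently supplies both the finite generation and the soluble-by-finite condition needed to apply Corollary~\ref{C-3.6}.
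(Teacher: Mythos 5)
Your proof is correct, and it reaches the two non-obvious conclusions by a genuinely different (and more elementary) route than the paper. The opening reduction is identical: both you and the paper observe that polycyclic-by-finite gives finite generation and solubility-by-finiteness, and then quote Corollary~\ref{C-3.6} to get $|G'|<\infty$; the finite-by-polycyclic claim is the same easy consequence in both (the paper leaves it implicit). For the Fitting subgroup the paper takes a longer path: it passes to $G/T_H$, where $T_H$ is the finite torsion subgroup of a polycyclic normal subgroup $H$ of finite index, notes that the torsion-free group $H/T_H$ meets the finite group $(G/T_H)'$ trivially and is therefore central of finite index, and then applies Lemma~\ref{L-3.4} --- which rests on polycyclicity, the $\FATR$ machinery and the Gruenberg--Mal'cev theorem (Theorem~\ref{T-2.4}) --- to conclude that $\Fit(G/T_H)$ is nilpotent before pulling the statement back to $\Fit(G)$. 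You instead invoke the Baer-type converse of Schur's theorem: for a finitely generated group with finite derived subgroup, each class $g_i^G\sue g_iG'$ is finite, so $Z(G)=\bigcap_i C_G(g_i)$ has finite index, whence $Z(G)\sub\Fit(G)$ forces $\Fit(G)$ to be abelian-by-finite. This bypasses Lemma~\ref{L-3.4} and the torsion-subgroup detour entirely, and actually delivers slightly more than the statement asks for: $G$ is centre-by-finite and $\Fit(G)$ is abelian-by-finite and of finite index in $G$, not merely nilpotent-by-finite. Both arguments are sound; yours trades the paper's structural machinery for a self-contained computation.
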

\begin{proof}
Without losing the generality, we can assume that $G$ is infinite. By assumption, $G$ contains a polycyclic normal subgroup $H$ of finite index. Since $H$ is finitely generated, thus $G$ is finitely generated non-nilpotent soluble group. By Corollary~\ref{C-3.6}, $G'$ is finite. Assume that $T_H$ is the torsion subgroup of $H$, by~\cite[2.49(ii)]{DKS}, $T_H$ is finite. As $H/T_H\neq 1$ is a torsion free subgroup of $G/T_H$ and $(G/T_H)'$ is finite, so $H/T_H$ is a central subgroup of finite index. If $G/T_H$ is nilpotent, there is nothing more to prove. Assume that $G/T_H$ is non-nilpotent. By Lemma~\ref{L-3.4}, $G/T_H$ is polycyclic and $\Fit(G/T_H)$ is nilpotent. As $\Fit(G)T_H/T_H\sub\Fit(G/T_H)$, so $\Fit(G)/(\Fit(G)\cap T_H)$ is nilpotent. Therefore $\Fit(G)$ is nilpotent-by-finite.
\end{proof}

Any locally graded meta-$\nil$-Hamiltonian group is soluble by~\cite[Lemma 5.11]{DM}, so every finite meta-$\nil$-Hamiltonian group is soluble. In the following remark, we examine the insoluble meta-$\nil$-Hamiltonian groups.

\begin{remark} \label{K-3.13}
Let $G$ be an insoluble meta-$\nil$-Hamiltonian group. Then $G$ is infinite.

\item[(a)] Assume that $G$ is perfect, then $G$ is minimal non-nilpotent.
\begin{itemize}
\item[(a-i)] If $G$ has a maximal subgroup, then $G$ is finitely generated and $G/\Phi(G)$ is non-abelian simple group by~\cite[propositions 5.8, 5,9]{DM}. In additional, by Corollary~\ref{C-3.6}, $G$ does not contain any soluble subgroup of finite index, so any maximal subgroup of $G$ is non-normal of infinite index and every finite normal subgroup of $G$ is contained in $\Phi(G)$ and is central by $N/C$-Theorem.

\item[(a-ii)] Otherwise $G$ is Fitting $p$-group for some prime $p$ by~\cite[Theorem 3.3 (i) and (ii)]{Smith}. 
\end{itemize}

\item[(b)] Assume that $G$ is not perfect, then $G''=\gamma_3(G)$ is perfect insoluble and is satisfied (a).
\end{remark}

\begin{theorem}
Let $G$ be a finitely generated insoluble meta-$\nil$-Hamiltonian. If $G$ is periodic, then:
\begin{itemize}
\item[(i)] $G''=\gamma_3(G)$ is perfect minimal non-nilpotent of finite index;
\item[(ii)]  the Frattini factor group of  $\gamma_3(G)$ is non-abelian simple group;
\item[(iii)] $\gamma_3(G)$ is intersection of all subgroups of finite index;
\item[(iv)] $\gamma_3(G)$ does not any subgroup of finite index.
\end{itemize} 
\end{theorem}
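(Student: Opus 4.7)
The plan is to pipe everything through Remark~\ref{K-3.13}, using only one extra input — namely, that a finitely generated periodic nilpotent group is finite.

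For (i), I would split on whether $G$ is perfect. If $G=G'$, then trivially $G''=\gamma_3(G)=G$ and Remark~\ref{K-3.13}(a) delivers minimal non-nilpotence. If $G$ is not perfect, Remark~\ref{K-3.13}(b) already states that $G''=\gamma_3(G)$ is perfect insoluble; since the meta-$\nil$-Hamiltonian property is inherited by subgroups, Remark~\ref{K-3.13}(a) now applies to $\gamma_3(G)$ itself and yields minimal non-nilpotence. For the finite-index assertion, the quotient $G/\gamma_3(G)$ is nilpotent of class at most two, finitely generated and periodic; being finitely generated nilpotent it is polycyclic, and a polycyclic periodic group is finite.

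I would then deduce (iv) by contradiction. Suppose $H<\gamma_3(G)$ has finite index; then $H$ is contained in a maximal subgroup $M$ of $\gamma_3(G)$, which also has finite index and, by the minimal non-nilpotence from (i), is nilpotent. Hence $\gamma_3(G)$ is nilpotent-by-finite, in particular soluble-by-finite. Since $\gamma_3(G)$ is also finitely generated (having finite index in the finitely generated $G$) and non-nilpotent meta-$\nil$-Hamiltonian, Corollary~\ref{C-3.6} forces $(\gamma_3(G))'=\gamma_3(G)$ to be finite, contradicting the opening line of Remark~\ref{K-3.13} that insoluble meta-$\nil$-Hamiltonian groups are infinite.

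With (i) and (iv) in hand, (ii) and (iii) follow at once. For (ii), $\gamma_3(G)$ is finitely generated and hence admits maximal subgroups, so Remark~\ref{K-3.13}(a-i) identifies $\gamma_3(G)/\Phi(\gamma_3(G))$ as a non-abelian simple group. For (iii), any finite-index subgroup $K\leq G$ intersects $\gamma_3(G)$ in a finite-index subgroup, which by (iv) must equal $\gamma_3(G)$ itself, so $\gamma_3(G)\leq K$; conversely $\gamma_3(G)$ has finite index in $G$ by (i). The main obstacle in principle is the identification $G''=\gamma_3(G)$ in the non-perfect case, but this is already packaged inside Remark~\ref{K-3.13}(b), so once that remark is invoked the remaining work is just routine bookkeeping.
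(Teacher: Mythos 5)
Your proposal is correct and follows essentially the same route as the paper: Remark~\ref{K-3.13} plus the finiteness of finitely generated periodic nilpotent groups gives (i), and Corollary~\ref{C-3.6} rules out nilpotent subgroups of finite index to give (iii) and (iv). The only (immaterial) difference is that you prove (iv) first by applying Corollary~\ref{C-3.6} to $\gamma_3(G)$ and deduce (iii) from it, whereas the paper proves (iii) first by applying Corollary~\ref{C-3.6} to $G$ itself and then reads off (iv).
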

\begin{proof}
(i) If $G$ is perfect, then $G=G''=\gamma_3(G)$ is minimal non-nilpotent group and if $G'\neq G$, then $G''=\gamma_3(G)$ is perfect insoluble group and so is minimal non-nilpotent (Remark~\ref{K-3.13}-(b)). As $G/G''$ is finitely generated nilpotent periodic group, so if finite. 

(ii) By (i), $\gamma_3(G)$ is a finitely generated insoluble group. Since every finitely generated group have a maximal subgroup, so  $\gamma_3(G)/\Phi(\gamma_3(G))$ is non-abelian simple group (Remark~\ref{K-3.13}-(a)).

(iii) Suppose that $H\sub G$ is a subgroup of  finite index. If $\gamma_3(G)\nsub H$, then $\gamma_3(G)\cap H$ is a nilpotent subgroup of finite index in $G$. By Corollary~\ref{C-3.6}, $G'$ is finite, so $G$ is soluble, which  leads to a contradiction. 

(iv) By (i), $\gamma_3(G)$ is of finite index, therefore, the result is obtained from (iii).
\end{proof}

\section{Para-$\mathfrak{Nil}$-Hamiltonian groups}
We start this section with the following lemma, which presents a descending series for finitely generated groups with a non-nilpotent subgroup of finite index. This descending series will play an important role in the proof of Theorem~\ref{T-3.11}.

\begin{lemma}\label{L-3.1}
Let $G$ be a finitely generated group  and $N$ be a torsion-free nilpotent normal subgroup of $G$ of finite index. Then for any tow distance primes $q>p>|G/N|$, there exist tow natural numbers $r$  and $s$ which are not zero together such that,  for any $i\geq r$ and $j\geq s$, the subgroup $N/N^{p^iq^j}$ has a complement $K_{i,j}/N^{p^iq^j}$ in $G/N^{p^iq^j}$. In particular, for any $i'\geq i$ and any $j'\geq j$ the subgroup $K_{i',j'}N^{p^iq^j}/N^{p^iq^j}$ is a complement of $N/N^{p^iq^j}$ and we can assume that $K_{i',j'}\sub K_{i,j}$. Hence for every $j$ we get the following sequence (of course, for every $1\leq t<i$, it is possible that $G = K_{t,j}$):
\[\cdots\sub K_{i,j}\sub\cdots\sub K_{r,j}\sub\cdots\sub K_{1,j}.\]
\end{lemma}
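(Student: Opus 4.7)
The plan is to construct the complements $K_{i,j}$ in three stages: first, establish that each finite quotient $G/N^{p^iq^j}$ splits as a semidirect product via Schur--Zassenhaus; second, verify that the canonical projection between two such quotients sends complements to complements; and third, invoke a compactness argument to secure the nested family.

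Since $N$ is a finitely generated torsion-free nilpotent group, the verbal subgroup $N^{p^iq^j}$ (generated by $(p^iq^j)$-th powers) is characteristic in $N$, hence normal in $G$. The quotient $N/N^{p^iq^j}$ is a finitely generated nilpotent group of exponent dividing $p^iq^j$, and since any finitely generated nilpotent group of finite exponent is finite, $N/N^{p^iq^j}$ is a finite $\{p,q\}$-group. The hypothesis $|G/N|<p<q$ therefore yields $\gcd(|G/N|,|N/N^{p^iq^j}|)=1$ for every pair with $i+j\geq 1$, and Schur--Zassenhaus applied to
\[1\to N/N^{p^iq^j}\to G/N^{p^iq^j}\to G/N\to 1\]
produces a complement $K_{i,j}/N^{p^iq^j}$. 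One may therefore take $r=1$, $s=0$, or any other pair with $r+s\geq 1$.

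For $i'\geq i$ and $j'\geq j$, one has $p^iq^j\mid p^{i'}q^{j'}$, so $N^{p^{i'}q^{j'}}\sue N^{p^iq^j}$ and the projection $G/N^{p^{i'}q^{j'}}\to G/N^{p^iq^j}$ is well-defined. Using Dedekind's identity together with $K_{i',j'}\cap N=N^{p^{i'}q^{j'}}$ (which follows from $K_{i',j'}/N^{p^{i'}q^{j'}}$ being a complement),
\[K_{i',j'}N^{p^iq^j}\cap N=(K_{i',j'}\cap N)N^{p^iq^j}=N^{p^{i'}q^{j'}}N^{p^iq^j}=N^{p^iq^j},\]
while $K_{i',j'}N^{p^iq^j}\cdot N=G$ because already $K_{i',j'}N=G$. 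Thus $K_{i',j'}N^{p^iq^j}/N^{p^iq^j}$ is again a complement of $N/N^{p^iq^j}$.

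To arrange the nesting $K_{i',j'}\sue K_{i,j}$ simultaneously for all $(i,j)\leq(i',j')$, I would invoke compactness. Let $\mathcal{C}_{i,j}$ denote the non-empty finite set of complements of $N/N^{p^iq^j}$ in $G/N^{p^iq^j}$; by the previous paragraph the projections induce compatible surjections $\mathcal{C}_{i',j'}\to\mathcal{C}_{i,j}$, so $\{\mathcal{C}_{i,j}\}$ is an inverse system of non-empty finite sets over a directed poset. Its inverse limit is non-empty, and any element specifies a coherent family for which pulling back to $G$ gives $K_{i',j'}N^{p^iq^j}=K_{i,j}$, and hence $K_{i',j'}\sue K_{i,j}$. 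For $1\leq t<r$, setting $K_{t,j}=G$ extends the chain to the displayed form. The main obstacle is precisely this last coherence step, since the Schur--Zassenhaus existence is immediate once the arithmetic of orders is arranged; the inverse-limit formulation sidesteps the cumbersome two-parameter induction using conjugacy of complements that a direct construction would otherwise require.
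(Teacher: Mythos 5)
Your proposal is correct, and its first two stages (Schur--Zassenhaus existence of the complements $K_{i,j}/N^{p^iq^j}$ after checking that $N/N^{p^iq^j}$ is a finite $\{p,q\}$-group coprime to $|G/N|$, and the Dedekind computation showing that $K_{i',j'}N^{p^iq^j}/N^{p^iq^j}$ is again a complement) coincide with the paper's argument almost line for line; the paper's only additional wrinkle is that it introduces $r,s$ to guarantee $N^{p^i}$ and $N^{q^j}$ are proper in $N$ via residual finiteness of finitely generated torsion-free nilpotent groups, a non-degeneracy point your choice $r=1$, $s=0$ handles just as well. Where you genuinely diverge is the nesting step. The paper uses the conjugacy part of Schur--Zassenhaus: given a complement $T$ at level $(i',j')$, its image $TN^{p^iq^j}/N^{p^iq^j}$ is conjugate to $K_{i,j}/N^{p^iq^j}$, so one replaces $T$ by $T^g$ to force $K_{i',j'}=T^g\sub K_{i,j}$; this is done pairwise and then assembled into the displayed chains. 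You instead observe that the sets $\mathcal{C}_{i,j}$ of complements are finite and non-empty and form an inverse system over the directed poset of pairs $(i,j)$, so the inverse limit is non-empty and yields a globally coherent family with $K_{i',j'}N^{p^iq^j}=K_{i,j}$ for \emph{all} comparable pairs simultaneously. This is a legitimate and in fact slightly stronger conclusion than the paper's pairwise statement, and it cleanly avoids the bookkeeping of a two-parameter induction; note that non-emptiness of an inverse limit of non-empty finite sets over a directed poset does not even require your claimed surjectivity of the transition maps (though that surjectivity does hold, by the same conjugacy argument the paper uses). The trade-off is that the paper's conjugation argument is more elementary and produces the complements explicitly, which is all that Lemma~\ref{L-3.2} and Theorem~\ref{T-3.11} later need.
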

\begin{proof}
 Assume that $q>p>|G/N|$ are distance primes. We not that, as $G$ is finitely generated and $N$ is of finite index, so $N$ is finitely generated torsion-free and according to~\cite[5.2.21]{Rob}, $N$ is residually finite. Thus, there exist tow natural numbers $r$ and $s$, such that for any $i\geq r$ and $j\geq s$, the subgroups $N^{p^i}$ and $N^{q^j}$ are proper subgroups of  $N$.  Since
 \[|G:N^{p^iq^j}|=|G:N||N:N^{p^iq^j}|\] 
 and $N/N^{p^iq^j}$ is finite, since is finitely generated periodic nilpotent group, so $G/N^{p^iq^j}$ is finite. We note that  
 \[|G/N^{p^iq^j}: N/N^{p^iq^j}|=|G/N|<p,\] 
so 
$(|G/N^{p^iq^j}: N/N^{p^iq^j}|, |N/N^{p^iq^j}|)=1$. Hence by Schur-Zassenhaus Theorem, for some supplement $K_{i,j}$ of $N$,
 \[\frac{G}{N^{p^iq^j}}\cong \frac{N}{N^{p^iq^j}}\rtimes \frac{K_{i,j}}{N^{p^iq^j}}.\]
Therefore $G=NK_{i,j}$ and $N\cap K_{i,j}=N^{p^iq^j}$.

Now assume that $i'\geq i$, $j'\geq j$ and $T/N^{p^{i'}q^{j'}}$ is a complement of $N/N^{p^{i'}q^{j'}}$. As $G=NT$, $TN^{p^iq^j}/N^{p^iq^j}$ is a complement of $N/N^{p^iq^j}$, for
\[TN^{p^iq^j}\cap N=N^{p^iq^j}(T\cap N)=N^{p^iq^j}N^{p^{i'}q^{j'}}=N^{p^iq^j},\]
hence for some $g\in G$, $K_{i,j}=T^gN^{p^iq^j}$. By taking $K_{i',j'}=T^g$, $K_{i',j'}N^{p^iq^j}/N^{p^iq^j}$ is a complement of $N/N^{p^iq^j}$ and we get $K_{i',j'}\sub K_{i,j}$.
\end{proof}
Assume that $G$ is a finitely generated group and $N$ is a torsion-free nilpotent normal subgroup of finite index. 
By Lemma~\ref{L-3.1}, for any prime $p>|G/N|$, and for some $i\geq 1$ depending to $p$, we have \[\frac{G}{N^{p^j}}\cong \frac{N}{N^{p^j}}\rtimes \frac{K_j}{N^{p^j}},\]
where $j\geq i$. Also we can assume that  \[\cdots\sub K_{j+1}\sub K_j\sub\cdots\sub K_1.\] 
The series mentioned above is referred to as the dependent series for $p$ (of course, for every $1\leq t<i$, it is possible that $ K_t=G$).

\begin{lemma}\label{L-3.2}
Let $G$ be a finitely generated group and $N$ be a torsion-free nilpotent normal subgroup of finite index. If $G$ is non-nilpotent, then for any prime $p$, the dependent subgroups $K_i$ are non-nilpotent.
\end{lemma}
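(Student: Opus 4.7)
The plan is to argue by contradiction: assume that the dependent subgroup $K_i$ is nilpotent, of class $c$ say, and deduce that $G$ itself must be nilpotent, contradicting the hypothesis. The feature of the construction I intend to exploit is the inclusion $N^{p^i}=N\cap K_i\sub K_i$, which means that conjugation of $N^{p^i}$ by elements of $K_i$ inside $G$ is literally inner conjugation inside the nilpotent group $K_i$. This yields for free that $[N^{p^i},\,{}_{c}K_i]\sub\gamma_{c+1}(K_i)=1$, so $K_i$ acts on $N^{p^i}$ nilpotently of class at most $c$.

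The central step is to promote this nilpotent action from the finite-index subgroup $N^{p^i}$ to the whole of $N$. For this I pass to the upper central series $1=Z_0(N)\sub Z_1(N)\sub\cdots\sub Z_d(N)=N$, whose successive quotients $\bar A_j=Z_{j+1}(N)/Z_j(N)$ are torsion-free abelian, a standard property of torsion-free nilpotent groups. On each $\bar A_j\cong\Z^{r_j}$ the $K_i$-action linearises to a homomorphism $\rho\colon K_i\to\GL_{r_j}(\Z)$; since the image of $N^{p^i}\cap Z_{j+1}(N)$ in $\bar A_j$ contains $p^i\bar A_j$, the class-$c$ estimate on $N^{p^i}$ translates into the matrix identity $p^i(\rho(k_c)-1)\cdots(\rho(k_1)-1)=0$ in $M_{r_j}(\Z)$, and torsion-freeness of $\bar A_j$ lets us cancel $p^i$ and conclude that the identity holds on all of $\bar A_j$. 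Iterating up the central series produces a uniform bound $[N,\,{}_{c'}K_i]=1$ with $c'$ depending only on $c$ and $d$.

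Once this nilpotent action is in place, the conclusion is a standard commutator calculation. Since $G/N\cong K_i/N^{p^i}$ is nilpotent of class at most $c$ as a quotient of $K_i$, we have $\gamma_{c+1}(G)\sub N$; combining this with $\gamma_{d+1}(N)=1$ and $[N,\,{}_{c'}K_i]=1$ one obtains $\gamma_{m}(G)=1$ for some $m$ depending only on $c$, $c'$ and $d$, so $G$ is nilpotent, contradicting the hypothesis. (As an alternative route one might attempt Theorem~\ref{Amb} with $A=N$, $B=K_i$, and a carefully chosen normal nilpotent minimax $K$ witnessing $G=AB=AK=BK$, but producing such a $K$ directly from the given data does not seem immediate.)

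The obstacle I expect to be main is the extension step: on each abelian central factor the torsion-free matrix trick is clean, but one must verify at every stage that the image of $N^{p^i}\cap Z_{j+1}(N)$ in $\bar A_j$ really does contain $p^i\bar A_j$ (so that the trick applies) and then combine the per-factor class bounds into a single nilpotency class for the $K_i$-action on $N$.
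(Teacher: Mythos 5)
Your proposal is correct in outline but takes a genuinely different route from the paper. The paper argues by contradiction using \emph{two} distinct primes $q>p>|G/N|$: assuming nilpotent dependent subgroups for both $p$ and $q$, it forms the common complement $K_{i,j}$ of $N/N^{p^iq^j}$ and sets $A=K_{i,j}N^{p^i}$, $B=K_{i,j}N^{q^j}$, $C=N$; since $|G:A|$ and $|G:B|$ are powers of $p$ and of $q$ and both are prime to $|G/N|$, one gets $G=AB=AC=BC$, and Theorem~\ref{Amb} (with $C=N$ normal and minimax) yields the contradiction. This is precisely the triple factorization you correctly judged unavailable from a single prime. Your replacement --- linearizing the $K_i$-action on the upper central factors of $N$ and cancelling $p^i$ by torsion-freeness to upgrade $[N^{p^i},{}_{c}K_i]=1$ to $[N,{}_{cd}K_i]=1$ --- is sound: the factors $Z_{j+1}(N)/Z_j(N)$ are finitely generated, torsion-free abelian and $G$-invariant, the image of $N^{p^i}\cap Z_{j+1}(N)$ contains $p^i\bar A_j$ because $a^{p^i}\in N^{p^i}\cap Z_{j+1}(N)$ for every $a\in Z_{j+1}(N)$, and the iterated commutator does linearize to $(\rho(k_c)-1)\cdots(\rho(k_1)-1)$ modulo $Z_j(N)$. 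What your approach buys is the lemma exactly as stated: nilpotency of a single $K_i$ for a single prime already gives the contradiction, whereas the paper's proof, as written, negates the statement as simultaneous nilpotency for two distinct primes.

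The one step you should not wave through as a ``standard commutator calculation'' is the last one. From $\gamma_{c+1}(G)\sub N$, $\gamma_{d+1}(N)=1$ and $[N,{}_{cd}K_i]=1$ you cannot simply chain central series: $G/N$ and $N$ nilpotent do not imply $G$ nilpotent, and $[N,{}_{m}G]$ does not descend to $1$ on its own because $N$ need not centralize the factors $[N,{}_{m}K_i]/[N,{}_{m+1}K_i]$. The clean fix is P.~Hall's nilpotency criterion: in $G/N'$ the image of $N$ is abelian and normal, so $[N/N',{}_{m}(G/N')]=[N/N',{}_{m}(K_iN'/N')]$ and hence $\gamma_{c+1+cd}(G/N')=1$; since both $N$ and $G/N'$ are nilpotent, Hall's theorem gives $G$ nilpotent. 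With that citation (or the equivalent computation) added, your argument is complete.
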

\begin{proof}
Assuming the verdict is false, let's suppose that $q>p>|G/N|$ are prime numbers, and there exist natural numbers $i$ and $j$ such that the dependent subgroups $K_i$ and $K_j$ respect to $p$ and $q$,  are nilpotent. Assume that $K_{i,j}/N^{p^iq^j}$ is a complement of $N/N^{p^iq^j}$ in $G/N^{p^iq^j}$. By Lemma~\ref{L-3.1}, $K_{i,j}N^{p^i}$ is conjugate with $K_i$ and $K_{i,j}N^{q^j}$ with $K_j$, so both are nilpotent. Since $|G:K_{i,j}N^{p^i}|=|N: N^{p^i}|$ is a finite power of $p$ and $|G:K_{i,j}N^{q^j}|=|N: N^{q^j}|$ is a finite power of $q$ and both are coprime to $|G/N|<p$, so by taking $A=K_{i,j}N^{p^i}$,  $B=K_{i,j}N^{q^j}$ and $C=N$, $G=AB=AC=BC$ is triple product of three nilpotent subgroups of coprime index. As $C$ is normal minimax subgroup of $G$, by Theorem~\ref{Amb}, $G$ is nilpotent which is a contradiction.
\end{proof}

\begin{corollary}\label{C-3.3}
Let $G$ be a finitely generated group and $H$ be a nilpotent subgroup of finite index. Then, either $G$ finite-by-nilpotent or $G$ is not minimal non-nilpotent.
\end{corollary}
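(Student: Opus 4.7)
The plan is to prove the contrapositive: assume $G$ is minimal non-nilpotent and derive that $G$ is finite-by-nilpotent. To do this, I want to reduce to the situation of Lemma~\ref{L-3.2}, which requires a \emph{torsion-free} nilpotent normal subgroup of finite index; the given $H$ is merely nilpotent and not a priori normal.

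First I would replace $H$ by its normal core $N = H_G$. Since $H$ is nilpotent and has finite index, $N$ is a normal nilpotent subgroup of finite index in $G$, and because $G$ is finitely generated, so is $N$. Hence the torsion subgroup $T = T(N)$ is finite; being characteristic in $N$, it is normal in $G$. I now pass to $\bar G = G/T$ and $\bar N = N/T$, which is a torsion-free nilpotent normal subgroup of finite index in $\bar G$. If $\bar N$ were trivial then $G$ would be finite, hence trivially finite-by-nilpotent, and if $\bar G$ were nilpotent then $G$ would be (finite)-by-nilpotent via the finite kernel $T$; in either case there is nothing to prove.

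So assume $\bar G$ is non-nilpotent and $\bar N \ne 1$. Choose a prime $p > |G/N| = |\bar G/\bar N|$. Since $\bar N$ is a nontrivial finitely generated torsion-free nilpotent group, $\bar N^{p^i}$ is a proper subgroup of $\bar N$ for all sufficiently large $i$. Lemma~\ref{L-3.1} (in the single-prime form recorded immediately after its proof) produces, for such $i$, a dependent subgroup $\bar K_i$ with $\bar G = \bar N\,\bar K_i$ and $\bar N \cap \bar K_i = \bar N^{p^i}$; in particular $|\bar G:\bar K_i| = |\bar N : \bar N^{p^i}|$ is a finite power of $p$ strictly greater than $1$, so $\bar K_i$ is a proper subgroup of $\bar G$. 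By Lemma~\ref{L-3.2} applied to $\bar G$, the subgroup $\bar K_i$ is non-nilpotent.

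Finally I lift: let $K_i$ be the preimage of $\bar K_i$ in $G$ under the canonical map $G \to \bar G$. Then $K_i$ is proper in $G$ (it corresponds to a proper subgroup modulo $T$) and has $\bar K_i$ as a quotient, so it is non-nilpotent. This exhibits a proper non-nilpotent subgroup of $G$, contradicting the assumption that $G$ is minimal non-nilpotent. The main obstacle I anticipate is purely bookkeeping: verifying that $T$ is well-defined and central enough to be normal in all of $G$ (not just in $N$), and checking that the hypotheses of Lemmas~\ref{L-3.1} and~\ref{L-3.2} truly transfer from $G$ to $\bar G$, so that $\bar K_i$ is properly contained in $\bar G$ and non-nilpotent as soon as $\bar G$ itself is non-nilpotent.
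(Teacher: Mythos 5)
Your proposal is correct and follows essentially the same route as the paper: pass to the core $H_G$, quotient by its finite torsion subgroup to obtain a torsion-free nilpotent normal subgroup of finite index, and invoke Lemma~\ref{L-3.2} to produce a proper non-nilpotent (dependent) subgroup, which pulls back to one in $G$. Your version merely makes explicit a few points the paper leaves implicit (normality of the torsion subgroup in $G$, properness of $\bar K_i$, and the degenerate cases $\bar N = 1$ or $\bar G$ nilpotent).
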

\begin{proof}
Assume that $G$ is not finite-by-nilpotent. Since $H_G=\Core_G(H)$ is a finitely generated nilpotent group, so $G$ is polycyclic-by-finite and $T_H$ the torsion subgroup of $H_G$ is finite (by~\cite[2.49(ii)]{DKS}). As $H_G/T_H\neq 1$ is a torsion-free nilpotent subgroup of finite index of a finitely generated non-nilpotent group $G/T_H$, so by Lemma~\ref{L-3.2}, $G/T_H$ contains a proper non-nilpotent subgroup, hence $G$ does too.
\end{proof}

Assume that $G$ is a para-$\mathfrak{Nil}$-Hamiltonian group and $H\nno G$. If $|H|$ is finite then $H$ is soluble, for $H$ is nilpotent or minimal-non-nilpotent. So, all finite insoluble subgroups of $G$ are normal.

\begin{theorem}\label{T-3.10}
Let $G$ be a para-$\mathfrak{Nil}$-Hamiltonian group. If $G$ has an insoluble subgroup of finite order, then $G$ is finite and so $G\cong A_5$ or $\SL(2,5)$.
\end{theorem}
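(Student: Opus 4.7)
The plan is to leverage the observation made in the paragraph immediately preceding the statement together with the results of~\cite{DM} recalled in the introduction. First I let $H$ denote the given finite insoluble subgroup of $G$. Because every finite insoluble subgroup of a para-$\mathfrak{Nil}$-Hamiltonian group must be normal (a non-normal subgroup would, by definition, have to be nilpotent or minimal non-nilpotent, both of which are soluble when finite), we may assume $H\nor G$.

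Since $H$ is finite and insoluble, it is non-nilpotent, so Schmidt's classical theorem on finite non-nilpotent groups furnishes a minimal non-nilpotent subgroup $M\sub H$. The crucial point in the argument is that the normal closure $M^{G}$ is contained in $H$, because $H\nor G$ and $M\sub H$; consequently $M^{G}$ is finite and, in particular, locally graded.

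Now $G$ is insoluble, as it contains the insoluble subgroup $H$, and it also contains a minimal non-nilpotent subgroup $M$ whose normal closure is locally graded. This is exactly the hypothesis of~\cite[Corollary~4.7]{DM} recalled in the introduction, which then forces $G$ to be finite. Finally,~\cite[Theorem~3.6]{DM} classifies every finite insoluble para-$\mathfrak{Nil}$-Hamiltonian group as $A_{5}$ or $\SL(2,5)$, completing the proof. The only real subtlety is the step of passing from the given finite insoluble subgroup to an internal Schmidt subgroup and noticing that its normal closure is automatically trapped inside $H$; once this is spotted, the statement becomes a clean consequence of the machinery already developed in~\cite{DM}.
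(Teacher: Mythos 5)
Your proof is correct, but it takes a genuinely different route from the paper's. You reduce the theorem to \cite[Theorem 4.6, Corollary 4.7]{DM}: since the finite insoluble subgroup $H$ must be normal (as you correctly argue, every non-normal finite subgroup is nilpotent or Schmidt, hence soluble), any minimal non-nilpotent subgroup $M\sub H$ has $M^G\sub H$ finite, hence locally graded, and the insolubility of $G$ then forces finiteness. All the steps check out: a finite non-nilpotent group does contain a minimal non-nilpotent subgroup, finite groups are locally graded, and being minimal non-nilpotent is intrinsic to $M$, so it is such a subgroup of $G$ as well. The paper instead argues directly: it first observes that the normal subgroup $H$ is itself an insoluble para-$\nil$-Hamiltonian group, hence $H\cong A_5$ or $\SL(2,5)$ by \cite[Theorem 3.6]{DM}, then pins down $\C_G(H)$ (trivial in the $A_5$ case, equal to $Z(H)$ in the $\SL(2,5)$ case) by exhibiting a subgroup of the form $K\C_G(H)$ with $K\cong A_4$ that would violate the para-$\nil$-Hamiltonian condition, and finally embeds $G/\C_G(H)$ into $\Aut(H)\cong S_5$ via the $N/C$-theorem. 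Your argument is shorter and outsources the work to the heavier machinery of \cite[Corollary 4.7]{DM}; the paper's is more self-contained, yields the identification of $H$ and the structure of $\C_G(H)$ along the way, and runs parallel to the proof of \cite[Theorem 3.6]{DM} rather than through the locally-graded normal closure result.
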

\begin{proof}
Assume that $H$ is a finite insoluble. Then $H$ is a  normal subgroup of $G$. As $H$ is para-$\nil$-Hamiltonian, so $H\cong A_5$ or $\SL(2,5)$, by \cite[Theorem 3.6]{DM}. Now we will proceed similarly to the proof of Theorem 3.6 in \cite{DM}, without any changes, and demonstrate that $G$ is finite. Hence, we get $G\cong A_5$ or $\SL(2,5)$.

First, assume that $H\cong A_5$. If $\C_G(H)\neq 1$, then $H\C_G(H)$ is a subgroup of $G$ which contain a non-normal subgroup $K\C_G(H)$, where $K\cong A_4$, which is not  minimal non-nilpotent. So $\C_G(H)=1$ and $G$ is embedded in $\Aut(H)\cong S_5$ is finite by $N/C$-theorem.

Now assume that $H\cong \SL(2,5)$. Obviously  $1\neq Z=Z(G)\sub\C_G(H)$. If $Z\neq \C_G(H)$, then $G/Z$ has a subgroup isomorphic to $A_5\times \C_G(N)/Z$, which leads to a contradiction similar to the previous case. Therefore, $\C_G(H)=Z(H)$ is of order $2$. By using $N/C$-theorem, $G/Z$  is embedded in  $\Aut(H)\cong S_5$. Therefore $G$ is finite again.
\end{proof}
By the above theorem, any finite subgroup of infinite para-$\nil$-Hamiltonian group, is soluble. Therefore any locally finite infinite para-$\nil$-Hamiltonian group is locally soluble.

\begin{theorem}\label{T-3.11}
Let $G$ be a finitely generated para-$\mathfrak{Nil}$-Hamiltonian group. If $G$ contains a nilpotent subgroup of finite index. Then for some $n$, $\gamma_n(G)$ is finite. In additional, if $G$ is insoluble, then $G$ is finite. 
\end{theorem}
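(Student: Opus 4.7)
The plan is to combine the dependent-series machinery of Lemmas~\ref{L-3.1} and~\ref{L-3.2} with the para-$\mathfrak{Nil}$-Hamiltonian hypothesis in order to bound a term of the lower central series of $G$, and then to close the insoluble case via Theorem~\ref{T-3.10}.

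First I would set $N=\Core_G(H)$, a finitely generated nilpotent normal subgroup of $G$ of finite index, and let $T$ denote its torsion subgroup, which is finite and characteristic in $N$, hence normal in $G$. Passing to $\bar G=G/T$, the image $\bar N=N/T$ is a torsion-free, finitely generated nilpotent, normal subgroup of finite index. If $\bar G$ is nilpotent of class $c$, then $\gamma_{c+1}(G)\sub T$ is finite and we are done. Otherwise, fix a prime $p>|\bar G:\bar N|$ and apply Lemma~\ref{L-3.1} to produce the dependent chain
\[\cdots\sub K_{j+1}\sub K_j\sub\cdots\sub K_1\]
in $\bar G$, with $\bar G=\bar N K_j$ and $\bar N\cap K_j=\bar N^{p^j}$; by Lemma~\ref{L-3.2} every $K_j$ is non-nilpotent. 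Inspecting the abelianization of $\bar N$ one sees $\bar N^{p^{j+1}}\subsetneq\bar N^{p^j}$ for all large $j$, which forces $K_{j+1}\subsetneq K_j$ strictly; hence each $K_j$ has a proper non-nilpotent subgroup and cannot be minimal non-nilpotent.

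Now, lifting back to $G$, the preimage $\tilde K_j$ of $K_j$ is non-nilpotent and not minimal non-nilpotent, so the para-$\mathfrak{Nil}$-Hamiltonian hypothesis forces $\tilde K_j\nor G$, equivalently $K_j\nor\bar G$. Thus $\bar G/K_j\cong\bar N/\bar N^{p^j}$ is a finite $p$-group of nilpotency class at most $c$, the class of $\bar N$, so $\gamma_{c+1}(\bar G)\sub K_j$ for every large $j$. Using residual $p$-finiteness of the finitely generated torsion-free nilpotent group $\bar N$, we have $\bigcap_j\bar N^{p^j}=1$, so $\gamma_{c+1}(\bar G)\cap\bar N=1$ and $\gamma_{c+1}(\bar G)$ embeds in the finite quotient $\bar G/\bar N$. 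Since $T$ is finite, $\gamma_{c+1}(G)$ itself is finite.

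For the second assertion, suppose $G$ is insoluble. Then $\gamma_{c+1}(G)$ must itself be insoluble, since $G/\gamma_{c+1}(G)$ is nilpotent (hence soluble) and a soluble-by-soluble extension is soluble. A finite insoluble normal subgroup now triggers Theorem~\ref{T-3.10} and forces $G$ to be finite. The main technical obstacle is establishing strict descent of the chain $(K_j)$ and checking that the para-$\mathfrak{Nil}$-Hamiltonian property of $G$ applies to the lifts $\tilde K_j$; once these are in hand, the rest of the argument is a straightforward assembly of the cited results.
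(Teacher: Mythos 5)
Your proposal is correct, and it reaches the stated conclusion by taking the opposite branch of the para-$\mathfrak{Nil}$-Hamiltonian dichotomy from the one the paper uses. Both arguments share the same setup: pass to $\bar G=G/T$ with $\bar N$ torsion-free finitely generated nilpotent of finite index, and invoke Lemmas~\ref{L-3.1} and~\ref{L-3.2} to obtain the descending chain of non-nilpotent dependent subgroups $K_j$. The paper then declares each $\overline{K_j}$ minimal non-nilpotent, forces the chain to stabilize, and contradicts residual finiteness of $\bar N$, concluding that $\bar G$ is nilpotent; this tacitly assumes the $\overline{K_j}$ are non-normal, since the para-$\mathfrak{Nil}$-Hamiltonian condition constrains only non-normal subgroups. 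You instead note that $K_{j+1}$ is a \emph{proper} non-nilpotent subgroup of $K_j$ (strictness following from $\bar N^{p^{j+1}}\subsetneq \bar N^{p^j}$, which you correctly extract from the infinite abelianization of $\bar N$), so no $K_j$ can be minimal non-nilpotent; hence each lift $\tilde K_j$ must be normal in $G$, and normality yields $\gamma_{c+1}(\bar G)\leqslant \bigcap_j K_j$, whose intersection with $\bar N$ is trivial by residual $p$-finiteness, giving $\gamma_{c+1}(G)$ finite. Your route handles the normal case that the paper's argument passes over, at the price of concluding only that some $\gamma_n(G)$ is finite rather than that $G/T$ is nilpotent — but that is precisely what the theorem asserts, and your finite insoluble term of the lower central series feeds Theorem~\ref{T-3.10} exactly as the paper's $T_H$ does. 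The only points worth tightening are the trivial degenerate case $\bar N=1$ (then $G$ is finite and there is nothing to prove) and an explicit citation for $\bigcap_j \bar N^{p^j}=1$ (residual $p$-finiteness of finitely generated torsion-free nilpotent groups), which is the same fact the paper invokes.
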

\begin{proof}
Let $H$ be a nilpotent subgroup of finite index in $G$. Then $H_G$ is a finitely generated normal subgroup of $G$ with finite torsion subgroup, $T_H$ say, (by~\cite[2.49(ii)]{DKS}). Now, $\overline{N}=H_G/T_H\neq 1$ is a torsion free nilpotent subgroup of $\overline{G}=G/T_H$.

Assume that  $G/T_H$ is non-nilpotent and $p>|G: H|$ is a prime. According to Lemma~\ref{L-3.2}, for some $i$,  $\overline{G}$ contains a series with non-nilpotent terms of dependent subgroups as follows:
\[\cdots\sub\overline{ K_{i+1}}\sub \overline{K_i}.\] 
Since $\overline{G}$ is para-$\nil$-Hamiltonian the subgroups $\overline{K_j}$ are minimal non-nilpotent for all $j\geq i$. Therefore, for any $j>i$, $\overline{K_i}=\overline{K_j}$. As 
\[\overline{K_{j-1}}=\overline{K_j}=\overline{N}^{p^j}\overline{K_{j-1}}\] and $\overline{N}\cap\overline{K_j}=\overline{N}^{p^j}$, so for all $j> i$, $\overline{N}^{p^i}=\overline{N}^{p^j}$. This is a contradiction, since $\overline{N}$ is residually finite. Therefore $G/T_H$ is nilpotent and for some $n$, $\gamma_n(G)\sub T_H$ is finite.

If $G$ is insoluble, $T_H$ is insoluble and by Theorem~\ref{T-3.10}, $G$ is finite.
\end{proof}
As a remark of  the additional part of Theorem~\ref{T-3.11}, we can prove this part without using Theorem~\ref{T-3.10}. Since $\overline{G}/\overline{N}^{p^j}$ is finite insoluble para-$\nil$-Hamiltonian for any $j\geq i$, so for any $j\geq i$, $\overline{G}/\overline{N}^{p^j}\cong A_5$ or $\SL(2,5)$. Therefore, for any $j> i$, $\overline{N}^{p^{i+1}}=\overline{N}^{p^j}$, and this leads to a contradiction because $\overline{N}$ is residually finite.

\begin{theorem}\label{T-3.12}
Let $G$ be a insoluble para-$\mathfrak{Nil}$-Hamiltonian group. Assume that $X\sub G$ is a non-nilpotent subgroup of finite order.
\begin{itemize}
\item[(i)] If $X^ G$ the normal closer of  $X$ is finite, as well as, $|X^G : X|$ is finite, then $G$ is finite. 
\item[(ii)] If $X\nor G$ or $X\nno G$ but $ X\nor X^G$, then $G$ is finite.
\item[(iii)] Assume that  $X^G$ is of infinite order, then $\gamma_3(G)=\gamma_{\infty}(G)$ is finitely generated perfect insoluble group that its Fitting factor is non-abelian simple. In additional, if $\gamma_3(G)$ is minimal non-nilpotent, then $\Fit(\gamma_3(G))=\Phi(\gamma_3(G))$.
\end{itemize}
\end{theorem}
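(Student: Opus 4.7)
The three parts share a single device, already used in Corollary~\ref{C-3.3} and Theorem~\ref{T-3.11}: in a para-$\mathfrak{Nil}$-Hamiltonian group $G$, any subgroup $H$ with $X \lneq H$ is non-nilpotent (it contains the non-nilpotent $X$) and has a proper non-nilpotent subgroup $X$, so $H$ cannot be minimal non-nilpotent; by the para-Hamiltonian hypothesis, $H$ is then normal in $G$. Combined with Theorem~\ref{T-3.10} and the remark that every finite subgroup of an infinite para-$\mathfrak{Nil}$-Hamiltonian group is soluble, this drives the whole argument. The plan is to handle (ii) first by reducing it to (i), then prove (i), and finally turn to (iii).

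\textbf{Parts (ii) and (i).} For (ii): if $X \trianglelefteq G$, then $X^G = X$ is finite and we are in the hypothesis of (i); if $X \trianglelefteq X^G$ but $X \ntrianglelefteq G$, pick $g \in G \setminus N_G(X)$, so that $X^g$ is also normal in $X^G$ and differs from $X$. Then $XX^g$ strictly contains $X$ and is normal in $G$ by the device, so $X^G \subseteq XX^g \subseteq X^G$ and $X^G = XX^g$ is finite, placing us under (i). For (i): the $N/C$-theorem applied to the finite normal $X^G$ gives $C := C_G(X^G)$ of finite index in $G$. If $C$ contains an element $c$ of infinite order, $\langle c\rangle X$ is a proper overgroup of $X$ and hence normal, so it contains $X^G$; writing $y \in X^G$ as $c^k x$ forces $c^k \in X^G \cap \langle c\rangle = 1$, so $X^G = X \trianglelefteq G$. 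An insoluble $X$ then makes $G$ finite directly by the remark after Theorem~\ref{T-3.10}; a soluble normal $X$ is handled by induction on $|X|$ through a minimal $G$-normal subgroup $N \leq X$, the hypotheses of (i) descending to $G/N$. If instead $C$ is torsion, so is $G$, and the arguments of Theorem~\ref{T-3.11} applied to a finitely generated subgroup $H \leq G$ that surjects onto $G/C$ and contains $X$ force $H$, and hence $G$, finite.

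\textbf{Part (iii).} Now $X^G$ is infinite. By (ii), $X$ is not normal in $X^G$, so the para-Hamiltonian condition forces the finite non-nilpotent $X$ to be minimal non-nilpotent; an insoluble $X$ would force $G$ finite via the remark after Theorem~\ref{T-3.10} and contradict $|X^G| = \infty$, so $X$ is a finite soluble Schmidt group. Finite generation of $X^G$ follows from the device: picking $g$ with $X^g \not\subseteq X$ (possible since $X \lneq X^G$), the subgroup $\langle X, X^g\rangle$ is a proper overgroup of $X$, hence normal in $G$, hence equal to $X^G$. For the identification $\gamma_3(G) = \gamma_\infty(G)$: the quotient $G/X^G$ is itself meta-$\mathfrak{Nil}$-Hamiltonian, because any non-nilpotent subgroup of $G/X^G$ pulls back to a non-nilpotent subgroup $Y \supseteq X^G$ of $G$ strictly containing $X$, and is thus normal in $G$ by the device. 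A careful analysis using the Schmidt structure of $X$ together with this meta-Hamiltonian quotient forces the lower central series of $G$ to stabilize from the third term. Insolubility and finite generation of $\gamma_3(G)$ follow from those of $G$ and $X^G$ respectively.

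\textbf{Main obstacle and the Fitting/Frattini coincidence.} Setting $H := \gamma_3(G)$, the hardest steps are the perfectness of $H$ and the simplicity of $H/\Fit(H)$. For perfectness: were $H' \lneq H$, the abelian characteristic quotient $H/H'$ combined with the nilpotency of $G/H$ and the insolubility of $H$ would extend the nilpotent series of $G$ past step three, contradicting the residual identity $\gamma_3(G) = \gamma_\infty(G)$. For simplicity of $H/\Fit(H)$: $H$ inherits a meta-$\mathfrak{Nil}$-Hamiltonian structure from $G$ (non-normal subgroups of $G$ inside $H$ are nilpotent or minimal non-nilpotent, and a minimal non-nilpotent one cannot properly contain the non-nilpotent radical of $H$ by the device), and Remark~\ref{K-3.13}(a) together with \cite[Propositions 5.8, 5.9]{DM} applies to the perfect insoluble $H$ and yields $H/\Fit(H)$ non-abelian simple. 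Finally, if $H$ is minimal non-nilpotent, every maximal $M < H$ is nilpotent; were $\Fit(H) \not\leq M$ we would have $H = M\Fit(H)$, so the non-abelian simple $H/\Fit(H) \cong M/(M \cap \Fit(H))$ would be a quotient of the nilpotent $M$, a contradiction. Hence $\Fit(H) \subseteq \Phi(H)$, and the reverse inclusion is the classical fact that $\Phi(H)$ is a nilpotent normal subgroup.
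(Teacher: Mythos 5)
There are genuine gaps, concentrated in parts (i) and (iii).

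In (i) your case analysis is both incomplete and beside the point. The induction ``through a minimal $G$-normal subgroup $N\sub X$'' does not descend: the image of a Schmidt group $X$ in $G/N$ may be nilpotent (e.g.\ $S_3/A_3$), so the hypothesis that $X$ is non-nilpotent is not preserved. The last case (``$C$ is torsion, so is $G$'') is worse: you invoke ``the arguments of Theorem~\ref{T-3.11}'' for a finitely generated subgroup $H$, but that theorem needs a nilpotent subgroup of finite index, which you do not have; moreover a finitely generated torsion group need not be finite, nor would finiteness of such an $H$ give finiteness of $G$. The paper's proof of (i) is one line: every subgroup of $G$ containing $X^G$ contains the non-nilpotent $X$ and, unless it equals $X$, cannot be minimal non-nilpotent, so it is normal; hence $G/X^G$ is Dedekind, hence nilpotent, hence soluble; since $G$ is insoluble, the finite normal subgroup $X^G$ is insoluble and Theorem~\ref{T-3.10} applies. (Your part (ii), and your proof in (iii) that $X^G$ is finitely generated, are correct and are pleasant alternatives to the paper's appeal to \cite[Lemma 4.5]{DM}.)

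In (iii) the structural core is missing. The paper works with the finite residual $J_X$ of $X^G$: since $X$ is maximal in $X^G$, every normal subgroup $Y$ of finite index in $X^G$ satisfies $X^G=XY$, so $|X^G:Y|\leq |X|$ and there are finitely many such $Y$; hence $J_X$ has finite index in $X^G$ (so is finitely generated), is insoluble, and $G/J_X$ is Dedekind, giving $\gamma_3(G)\sub J_X$. Moreover every $G$-normal subgroup properly contained in $J_X$ lies in $X$ (again by maximality of $X$ in $X^G$), hence is finite and, by the $N/C$-theorem, central in $J_X$. Everything then falls out: $J_X'=J_X$ (else $J_X'\sub X$ would make $J_X$ soluble), $\gamma_i(G)=J_X$ for all $i\geq 3$ (else $\gamma_i(G)\sub X$ would make $G$ soluble), $\Fit(J_X)=Z(J_X)$, and $J_X/\Fit(J_X)$ is simple. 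Your substitutes do not work: the perfectness argument is false, since $\gamma_3(G)=\gamma_\infty(G)$ only says $[H,G]=H$ and does not force $H'=H$ (in $S_3$ one has $\gamma_2=\gamma_\infty=A_3$, which is abelian); the claim that $H=\gamma_3(G)$ ``inherits a meta-$\nil$-Hamiltonian structure'' is unjustified (a subgroup of $H$ that is non-normal in $G$ and minimal non-nilpotent need not be normal in $H$), and Remark~\ref{K-3.13}(a) would in any case require $H$ to be minimal non-nilpotent, which is the extra hypothesis reserved for the final clause of (iii); and $\Phi(H)\sub\Fit(H)$ cannot be deduced from ``$\Phi(H)$ is nilpotent,'' which is a finite-group fact --- here it holds because $\Phi(H)$ is a proper $G$-normal subgroup of $J_X$, hence central. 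Finally, the identification $\gamma_3(G)=\gamma_\infty(G)$ itself is asserted via ``a careful analysis \dots forces,'' which is not an argument.
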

\begin{proof}
(i) As $G/X^G$ is a Dedekind group, then $X^G$ must be insoluble and so, by Theorem~\ref{T-3.6}, $G$ is finite.

(ii) Obviously, if $X\nor G$, $X^G=X$ is finite, so by (i), $G$ is finite. Otherwise, $X$ is maximal in $X^G$, by \cite[Lemma 4.5]{DM}. So,  if $X\nor X^G$,  again $X^G$ is finite. 

(iii) By assumption, neither $X\nor G$ nor $X\nor X^G$, as well as $|X^G : X|$ is not finite. Since  $X$ is maximal in $X^G$, by \cite[Lemma 4.5]{DM}, $X^G$ is finitely generated. Let $Y\nor X^G$ be of finite index. Then $X^G=XY$, thus $|X^G : Y|\leq |X|$ and so $X^G$ has finitely many normal subgroup of finite index. Hence $J_X$
 the finite residual of $X^G$ is of finite index, so we get $X^G=XJ_X$ and $J_X$ is finitely generate too. Since $X^G/J_X\cong X/(X\cap J_X)$ is soluble, so $J_X$ is insoluble. Therefore $G/J_X$ is a Dedekind group and so $\gamma_3(G)\sub J_X$. As $X$ is maximal subgroup of $X^G$, then any proper normal subgroup of $J_X$ must be contain in $X$ and so is soluble. Therefore $J_X$ is perfect. 
 Also  $\gamma_i(G)$, for any $i\geq 3$ is insoluble, so $\gamma_i(G)=J_X$,  thus $\gamma_3(G)=J_X=\gamma_{\infty}(G)$. 
 
For any proper normal subgroup $N$ of $J_X$, if $\C_{J_X}(N)$ is a proper subgroup of $J_X$, then by the N/C-theorem, $J_X$ is finite, which leads to a contradiction. Therefore, $N$ is a subgroup of the center of $J_X$, and this implies that the Fitting subgroup of $J_X$ is equal to the center of $J_X$, and $J_X/\Fit(J_X)$ is an infinite simple group.
 
 Assume that $J_X$ is minimal non-nilpotent and $\Fit(J_X)\neq \Phi(J_X)$. Then, for a maximal subgroup $M$ of $J_X$, we have $J_X=\Fit(J_X)M$, therefore $J_X/\Fit(J_X)$ is nilpotent a contradiction.
 \end{proof}

\subsection*{Funding }This paper is published as part of research supported by the Research Affairs Office of the University of Tabriz.


\begin{thebibliography}{99}
\bibitem{AmFrGi}
B. Amberg, S. Franciosi, F. de Giovanni, \emph{Products of groups},  Oxford University Press Inc., New York (1992).


\bibitem{BFT} M. Brescia, M. Ferrara, M. Trombetti, The structure of metahamiltonian groups, \emph{Japanese J. Math}, Vol. 18 (2023),1–65. \href{https://doi.org/10.1007/s11537-023-2216-3}{https://doi.org/10.1007/s11537-023-2216-3}

\bibitem{DM} N. Dastborhan, H. Mousavi, Groups whose non-normal subgroups are either nilpotent or minimal non-nilpotent, \emph{Ricerche di Matematica} (online 29 June 2024). \href{https://doi.org/10.1007/s11587-024-00870-9}{https://doi.org/10.1007/s11587-024-00870-9}

\bibitem{DKS}  M.R. Dixon, L.A. Kurdachenko, I. Ya. Subbotin,  \emph{Infinite groups}, CRC Press (2023).

\bibitem{FGM}  M. de Falco, F. de Giovanni, C. Musella, Metahamiltonian groups and related topics, \emph{Int. J. Group Theory}, 2(n.1) (2013), 117–129. \href{https://doi.org/10.22108/ijgt.2013.2673}{https://doi.org/10.22108/ijgt.2013.2673}

\bibitem{FGM1}
M. de Falco, F. de Giovanni, C. Musella, Groups whose finite homomorphic images are metahamiltonian, \emph{Comm. Alg.} Vol 37(7) (2009).
\href{https://dx.doi.org/10.1080/00927870802337168}{https://dx.doi.org/10.1080/00927870802337168}

\bibitem{FangAn} X. Fang, L. An, A classification of finite metahamiltonian p-groups, \emph{Communications in Mathematics and Statistics}, 9 (2021) 239–260. \href{https://doi.org/10.1007/s40304-020-00229-0}{https://doi.org/10.1007/s40304-020-00229-0}

\bibitem{LeRob}
 John C. Lennox,  Derek  J.S. Robinson, \emph {The Theory of  Infinite Soluble Groups}, Oxford University Press (2004).

\bibitem{MaGi}
 F. de Mari, F. de Giovanni, Groups with finitely many normalizers of non-nilpotent subgroups, Mathematical Proceedings of the Royal Irish Academy, 107A, (2) (2007), 143-152.\\  \href{https://www.jstor.org/stable/40656954}{https://www.jstor.org/stable/40656954}

\bibitem{Rob}
Derek  J.S. Robinson, \emph{A Course in the Theory of Groups}, Springer-Verlag (1996). 

\bibitem{RoSea1}
 G.M. Romalis, N.F. Sesekin, Metahamiltonian groups I. Ural. Gos. Univ. Mat. Zap. 5,  (1966), 101-106.
\bibitem{RoSea2} 
 G.M. Romalis, N.F. Sesekin, Metahamiltonian groups II. Ural. Gos. Univ. Mat. Zap. 6 (1968), 50-52. 
\bibitem{RoSea3}
 G.M. Romalis, N.F. Sesekin,  Metahamiltonian groups III. Ural. Gos. Univ. Mat. Zap. 7 (1969/70), 195-199.

\bibitem{Smith}
 H. Smith, Groups with  few non-nilpotent subgroups, \emph{Glasgow Math. J.} 39 (1997) 141-15. \href{https://doi.org/10.1017/S0017089500032031}{https://doi.org/10.1017/S0017089500032031}
\end{thebibliography}
\end{document}